\documentclass[12pt,reqno]{amsart}
\usepackage{fullpage}
\usepackage{amssymb,amsmath,amsthm,latexsym}
\usepackage[usenames]{color}
\usepackage{graphicx}
\usepackage[colorlinks=true,
linkcolor=webgreen,
filecolor=webbrown,
citecolor=webgreen]{hyperref}

\definecolor{webgreen}{rgb}{0,.5,0}
\definecolor{webbrown}{rgb}{.6,0,0}
\definecolor{red}{rgb}{1,0,0}

\usepackage[T1]{fontenc}
\newtheorem{theorem}{Theorem}[section]

\newtheorem{lemma} [theorem]{Lemma}

\newtheorem{remark}[theorem]{Remark}

\usepackage{enumerate}
\usepackage{color}

\begin{document}
\title[On transcendence of non-periodic continued fractions associated with modular forms and arithmetic functions]{On transcendence of non-periodic continued fractions associated with modular forms and arithmetic functions}

\author[Tapas Chatterjee]{Tapas Chatterjee}

\address{%
Department of Mathematics,\\Indian Institute of Technology Ropar, Punjab, India.}

\email{tapasc@iitrpr.ac.in}

\author{Sagar Mandal$^{*}$}
\address{Department of Mathematics,\\Indian Institute of Technology Ropar, Punjab, India.}
\email{sagar.25maz0008@iitrpr.ac.in, sagarmandal31415@gmail.com}
\thanks{$^{*}$The second author is financially supported by the University Grants Commission (UGC), Government of India, through the award of the Junior Research Fellowship (JRF) with ref. No. 241620111598.}

    \subjclass[2020]{Primary: 11A25,  11F11, 40A15, 11J81; Secondary: 11A55, 11B50.}
	
	
	
	\keywords{Eisenstein series, Euler’s phi function, Nathanson’s phi function, Modular Forms, Ramanujan tau function,  Transcendental numbers.}
\maketitle
    
    \begin{abstract}
The purpose of this article is two-folds. Firstly, we establish two sufficient conditions under which the sequence $\{f(n)\pmod{m}: n\geq1\}$ is non-periodic, where $f(n)$ is an arithmetic function. As consequences, we deduce that the sequences associated with the Ramanujan tau function $\tau(n)$ as well as the Fourier coefficients of certain normalized Eisenstein series $E_k(z)$ modulo $m$ are non-periodic. Further, we deduce that the sequence arising from Nathanson’s totient function $\Phi(n)$, the classical Euler's totient function $\varphi(n)$, sum of divisor function $\sigma(n)$, their Dirichlet convolution $\sigma * \varphi(n)$, Jordan's totient function $J_k(n)$, and unitary totient function $\varphi^*(n)$ modulo  $m$, are non-periodic for certain modulo $m$. In addition, we extend a result of Ayad and Kihel \cite{r1} on the non-periodicity of certain arithmetic function $g(n)$. On the other hand, we construct several transcendental numbers arising from the continued fractions attached with $\tau(n)$, $E_k(z)$, $\Phi(n)$, $g(n)$, $\varphi(n)$, $\sigma(n)$, $\sigma * \varphi(n)$, $J_k(n)$, and $\varphi^*(n)$. 
\end{abstract}

\tableofcontents
\section{Introduction}
A periodic sequence with period $L$ is a sequence $a_1,a_2,a_3\ldots$ satisfying $a_{n+L}=a_{n}$ for all $n\in \mathbb{N}$. For any arithmetic function $f(n)$, we say that the sequence $\{f(n)\pmod{m}\}_{n\geq1}$ is periodic if there exists a fixed positive integer $L$ such that $f(n+L)\equiv f(n) \pmod{m}\quad \text{for all~}n\in \mathbb{N}$. Also, $\{f(n) \pmod{m}\}_{n\geq 1}$ is called an eventually periodic sequence if there exist fixed positive integers $N,L$ such that $f(n+L)\equiv f(n) \pmod{m}\quad \text{for all~}n\geq N$. It is called a non-periodic sequence if it is not eventually periodic. 

Our first objective in this paper is to develop general criteria that guarantee the non-periodicity modulo $m$ of sequences arising from arithmetic functions. These criteria are then applied to a broad family of classical functions coming from modular forms and multiplicative number theory.

 Let $k$ be a positive integer. The space of modular forms of weight $k$ for the full modular group is denoted by $M_k(\mathrm{SL}_2(\mathbb{Z}))$. Every modular form $f \in M_k(\mathrm{SL}_2(\mathbb{Z}))$ admits a Fourier expansion at the cusp $i\infty$,
$$f(z) = \sum_{n=0}^{\infty} a_n q^n, \qquad q = e^{2\pi i z}.$$
The coefficient $a_0$ is called the constant term of $f$. A modular form is
a cusp form precisely when this constant term vanishes, or equivalently, $\lim_{z\to i\infty} f(z)=0$.
The space of cusp forms is denoted by $S_k(\mathrm{SL}_2(\mathbb{Z}))$. A nontrivial example of a modular form of weight $k$ for $\mathrm{SL}_2(\mathbb{Z})$ is the Eisenstein series, for $k \geq 4$, the Eisenstein series of weight $k$ is defined by 
$$G_k(z):= \sum_{(m,n)\in \mathbb{Z}^2 \setminus \{(0,0)\}}
    \frac{1}{(mz+n)^k}.$$
For every $k \ge 4$, the normalized Eisenstein series of weight $k$ is defined by
$$E_k(z):= \frac{G_k(z)}{2\zeta(k)}=1-\frac{2k}{B_k}\sum_{n=1}^{\infty} \sigma_{k-1}(n)q^n,$$
note that $E_k(z)$ is also a modular form of weight $k$ for $\mathrm{SL}_2(\mathbb{Z})$, where $B_k, \zeta(k),\sigma_{k}(n)$ are Bernoulli numbers, the Riemann zeta function, and generalized sum-of-divisors function respectively. For further details on modular forms, see \cite{murty}. The Ramanujan tau-function \cite{new1,new2} (\href{https://oeis.org/A000594}{A000594}) $\tau(n)$, which gives the coefficients of the unique normalized weight 12 cusp form for $\mathrm{SL}_2(\mathbb{Z})$ (with $q := e^{2\pi i z}$), is defined by
$$\Delta(z) = \sum_{n=1}^{\infty} \tau(n) q^n
= q \prod_{n=1}^{\infty} (1-q^n)^{24}
= q-24q^2+252q^3-1472q^4+4830q^5-\cdots.$$
This function has played a central role in the theory of modular forms. The multiplicativity property of the tau function $\tau(n)$ plays a pivotal role in the subsequent proofs, the multiplicativity property of $\tau(n)$ was originally conjectured by Ramanujan, which was later established by Mordell \cite{Mordell}.\\
We prove that the sequences associated with the Ramanujan tau function $\tau(n)$ as well as the Fourier coefficients of certain normalized Eisenstein series $E_k(z)$ modulo $m$ are non-periodic. The following theorems proceed in this direction.

\begin{theorem}\label{theorem tau}
$\{\tau(n) \pmod{m}\}_{n\geq1}$ is a non-periodic sequence for $m=5,7,8,9,691$.
\end{theorem}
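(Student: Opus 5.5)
The plan is to argue by contradiction, using the multiplicativity of $\tau$ (Mordell) together with the classical congruences for $\tau(n)$. Suppose that for some $m\in\{5,7,8,9,691\}$ there are positive integers $N,L$ with $\tau(n+L)\equiv\tau(n)\pmod m$ for all $n\ge N$. Then $\tau(n+tL)\equiv\tau(n)\pmod m$ for all $n\ge N$ and all $t\ge0$. The idea is to start from a prime $p$ with $\tau(p)\not\equiv 0\pmod m$ and move along the progression $p+tL$ to an integer exactly divisible by a prime $q$ with $\tau(q)\equiv0\pmod m$. Multiplicativity then forces the value there to be $\equiv 0$, which contradicts periodicity.

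\textbf{Step 1: congruences for $\tau(\ell)$ at primes.} I would use the standard Ramanujan-type congruences, valid for primes $\ell$ not dividing the modulus:
\begin{align*}
\tau(\ell)&\equiv \ell\,\sigma_1(\ell)\pmod 5, & \tau(\ell)&\equiv \ell\,\sigma_3(\ell)\pmod 7, & \tau(\ell)&\equiv \sigma_{11}(\ell)\pmod{2^8},\\
\tau(\ell)&\equiv \ell^2\sigma_1(\ell)\pmod{3^3}, & \tau(\ell)&\equiv \sigma_{11}(\ell)\pmod{691}. & &
\end{align*}
Here $\sigma_k(\ell)=1+\ell^k$, and the congruences modulo $8$ and $9$ follow from those modulo $2^8$ and $3^3$. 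From these:
\begin{itemize}
\item If $\ell\equiv1\pmod m$, then $\tau(\ell)\equiv2\not\equiv0\pmod m$ in every case.
\item If $\ell\equiv-1\pmod m$, then $\tau(\ell)\equiv0\pmod m$. For example, when $m=9$ we get $\ell^2(1+\ell)\equiv 0$, and when $m=8$ we get $1+\ell^{11}\equiv0$.
\end{itemize}
If I preferred a uniform source, the reduction-modulo-$m$ versions of these facts could equally be quoted from Swinnerton-Dyer's list.

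\textbf{Step 2: choice of primes.} By Dirichlet's theorem, choose a prime $q\equiv-1\pmod m$ with $q\nmid L$, so $\tau(q)\equiv0\pmod m$. Again by Dirichlet, choose a prime $p\equiv1\pmod m$ with $p\ge N$ and $p\ne q$, so $\tau(p)\not\equiv0\pmod m$.

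\textbf{Step 3: the contradiction.} Since $\gcd(L,q^2)=1$, the congruence $p+tL\equiv q\pmod{q^2}$ has a solution $t\ge0$. Put $n=p+tL\ge N$. Then $q\,\|\,n$, so $n=qn'$ with $\gcd(q,n')=1$. Multiplicativity gives
\[
\tau(n)=\tau(q)\,\tau(n')\equiv0\pmod m .
\]
On the other hand, periodicity gives $\tau(n)\equiv\tau(p)\equiv2\pmod m$, and $2\not\equiv 0\pmod m$ for each of these $m$. This contradiction shows the sequence is not eventually periodic.

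\textbf{Main obstacle.} The only delicate point is Step 1: making sure the quoted congruences are stated with the correct exponents and hold modulo exactly the $m$ needed, especially for $m=8$ and $m=9$. For those two moduli I would double-check $\tau(\ell)\equiv 1+\ell^{11}\pmod 8$ and $\tau(\ell)\equiv \ell^2(1+\ell)\pmod 9$ directly on small primes (e.g. $\tau(7)=-16744$, $\tau(17)$) as a sanity check. Everything else is Dirichlet's theorem and the Chinese remainder theorem. This argument is presumably an instance of the paper's general sufficient criterion: a multiplicative $f$ with one prime $q$ where $f(q)\equiv 0$, coprime to any proposed period, and infinitely many primes where $f\not\equiv0$.
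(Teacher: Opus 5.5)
Your proof is correct, but it is organized differently from the paper's.

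The paper applies its general Theorem~\ref{thm1.9}. It starts from a prime $p_N\ge N$ in a class such as $p\equiv 2\pmod m$ (or $p\equiv 3\pmod 8$), where $\tau(p_N)\equiv r_2\not\equiv 0$. It then moves along the progression to $n=p_N(1+mj_N'p_NL)$, where the second factor is a prime supplied by Dirichlet's theorem in the $L$-dependent progression $1\pmod{p_NmL}$. Multiplicativity then gives $\tau(n)\equiv 2r_2\not\equiv r_2$.

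You swap the roles of the two prime classes. You start at a prime $p\equiv 1\pmod m$ (value $2$) and use a fixed prime $q\equiv -1\pmod m$ with $m\mid\tau(q)$ and $q\nmid L$. You only need $q\,\|\,n$, which the Chinese remainder theorem provides, and the cofactor $n'$ is arbitrary. So you need Dirichlet only for the fixed classes $\pm1\pmod m$.

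Your argument in fact proves a multiplicative analogue of Theorem~\ref{thm1.8}: if $f$ is multiplicative, $m\mid f(q)$ for infinitely many primes $q$, and $f(n)\not\equiv 0\pmod m$ for infinitely many $n$ (prime or not), then $\{f(n)\bmod m\}$ is not eventually periodic. In exchange, the paper's criterion needs no primes at which $f$ vanishes modulo $m$. That is exactly what it needs for $\sigma*\varphi$, where $(\sigma*\varphi)(p)=2p$ is divisible by $m\ge 7$ for at most two primes $p$.

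One small correction to Step~1: $\tau(\ell)\equiv \ell^2\sigma_1(\ell)\pmod{27}$ is false. For $\ell=2$, $\tau(2)=-24\equiv 3\pmod{27}$, while $\ell^2\sigma_1(\ell)=12$. The classical congruence is $\tau(n)\equiv n^2\sigma_7(n)\pmod{27}$. Since $\ell^6\equiv 1\pmod 9$ for $3\nmid \ell$, it reduces to the congruence $\tau(\ell)\equiv \ell^2(1+\ell)\pmod 9$ that you actually use, so your argument is unaffected.
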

\begin{remark}
Using the same techniques used in this paper, one can obtain the non-periodicity results for $\{\tau(n) \pmod{m}\}_{n\geq1}$ for other $m$. For our purposes related to transcendence, we establish the result only for $m=5,7,8,9$. Moreover, since one of the most celebrated congruences involving $\tau(n)$ is the congruence modulo $691$, we also include the case $m=691$.
\end{remark}

\begin{theorem}\label{theorem es} We have
\noindent
\begin{enumerate}
\item[\upshape(1)] If $E_4(z)= \sum_{n\geq0}a(n)q^n$ then the sequence $\{a(n) \pmod{m}\}_{n\geq1}$ is non-periodic, where $m\geq3$ with $\gcd(m,240)=1$.
\item[\upshape(2)] If $E_6(z)= \sum_{n\geq0}b(n)q^n$ then the sequence $\{b(n) \pmod{m}\}_{n\geq1}$ is non-periodic, where $m\geq3$ with $\gcd(m,504)=1$, and $m\neq11$.
\item[\upshape(3)] If $E_8(z)=\sum_{n\geq0}c(n)q^n$ then the sequence $\{c(n) \pmod{m}\}_{n\geq1}$ is non-periodic, where $m\geq3$ with $\gcd(m,480)=1$, and $m\neq 43$.
\item[\upshape(4)] If $E_{10}(z)=\sum_{n\geq0}d(n)q^n$ then the sequence $\{d(n) \pmod{m}\}_{n\geq1}$ is non-periodic, where $m\geq3$ with $\gcd(m,264)=1$, and $m\neq 19$.
\item[\upshape(5)] If $E_{14}(z)=\sum_{n\geq0}e(n)q^n$ then the sequence $\{e(n) \pmod{m}\}_{n\geq1}$ is non-periodic, where $m\geq3$ with $\gcd(m,24)=1$, and $m\neq 2731$.
\end{enumerate}
\end{theorem}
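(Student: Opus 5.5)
The plan is to reduce everything to a divisor-sum statement, then derive a contradiction from periodicity using primes supplied by Dirichlet's theorem. Recall the explicit expansions
\[
E_4=1+240\sum_{n\ge1}\sigma_3(n)q^n,\quad E_6=1-504\sum_{n\ge1}\sigma_5(n)q^n,\quad E_8=1+480\sum_{n\ge1}\sigma_7(n)q^n,
\]
\[
E_{10}=1-264\sum_{n\ge1}\sigma_9(n)q^n,\quad E_{14}=1-24\sum_{n\ge1}\sigma_{13}(n)q^n .
\]
These come from $-2k/B_k$ with $B_4=-1/30$, $B_6=1/42$, $B_8=-1/30$, $B_{10}=5/66$ and $B_{14}=7/6$. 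So in each case the $n$-th coefficient is $c\,\sigma_j(n)$ for an integer $c\in\{240,-504,480,-264,-24\}$ and $j=k-1$. The hypothesis $\gcd(m,|c|)=1$ is exactly the one in each part.

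I will treat all five parts uniformly. Fix $c$ and $j$ as above, and suppose that $\{c\,\sigma_j(n)\pmod m\}_{n\ge1}$ were eventually periodic. Then there are $N,L\ge1$ with $c\,\sigma_j(n+L)\equiv c\,\sigma_j(n)\pmod m$ for all $n\ge N$. Since $\gcd(c,m)=1$, $c$ is invertible modulo $m$, so $\sigma_j(n+L)\equiv\sigma_j(n)\pmod m$ for $n\ge N$. Iterating, any two integers $n,n'\ge N$ with $n\equiv n'\pmod L$ satisfy $\sigma_j(n)\equiv\sigma_j(n')\pmod m$.

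By Dirichlet's theorem on primes in arithmetic progressions there are infinitely many primes $p\equiv1\pmod{mL}$. Choose three distinct such primes $p,p_1,p_2$, all exceeding $N$, and set $n=p$ and $n'=p_1p_2$. Both are $\equiv1\pmod L$ and both are $\ge N$.

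Now compute the two divisor sums. We have $\sigma_j(p)=1+p^j\equiv2\pmod m$. Since $p_1\ne p_2$ and $\sigma_j$ is multiplicative, $\sigma_j(p_1p_2)=(1+p_1^j)(1+p_2^j)\equiv4\pmod m$. Periodicity therefore forces $2\equiv4\pmod m$, i.e.\ $m\mid2$. This contradicts $m\ge3$, so the sequence is non-periodic in every case.

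The only step needing care is the reduction from $c\,\sigma_j$ to $\sigma_j$, which uses the coprimality hypothesis, together with getting the normalizing constants $c$ right from the Bernoulli numbers. I expect the main obstacle to be confirming those constants, since a wrong constant would change which moduli must be excluded. The argument does not use the extra exclusions $m\neq11,43,19,2731$: under the stated hypotheses these cases are simply covered as well, so the theorem as stated follows.
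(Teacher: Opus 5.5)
Your proof is correct, and it proves a stronger statement than the paper's. The paper also first cancels the unit $c$ (Lemma~\ref{lemmaess}). It then applies its general criterion, Theorem~\ref{thm1.9}, with $r_1=2$ coming from primes $p\equiv1\pmod m$ and $r_2=2^j+1$ coming from primes $p\equiv2\pmod m$.

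The hypothesis of that theorem, that $r_2$ and $r_1r_2$ be distinct modulo $m$, amounts to $m\nmid 2^j+1$. This is exactly where the exclusions come from: $33=3\cdot11$, $129=3\cdot43$, $513=3^3\cdot19$ and $8193=3\cdot2731$, with the factor $3$ already ruled out by the gcd conditions.

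Your argument uses the same multiplicativity-and-shift mechanism as the proof of Theorem~\ref{thm1.9}. The difference is that you compare a prime $p\equiv1\pmod{mL}$ with a product $p_1p_2$ of two such primes, which in effect takes $r_1=r_2=2$. The required inequality then becomes $2\not\equiv4\pmod m$, which holds for every $m\ge3$.

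Two consequences follow. First, the restrictions $m\neq11,43,19,2731$ are unnecessary. Second, you only need infinitely many primes $\equiv1$ modulo a given integer, which is the elementary case of Dirichlet's theorem; the paper additionally needs primes in the class $2\bmod m$.

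The paper's route buys uniformity with its other applications of the same criterion. Yours buys the sharper result with less input.
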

Sloane’s sequence \href{https://oeis.org/A004009}{A004009} enumerates $a(n)$,  \href{https://oeis.org/A013973}{A013973} enumerates $b(n)$, \href{https://oeis.org/A013974}{A013974} enumerates $d(n)$ and \href{https://oeis.org/A058550}{A058550} enumerates $e(n)$.

 In number theory, the concept of relatively prime subsets generalizes the idea of coprime integers. A nonempty finite set $A \subseteq \mathbb{N}$ is called relatively prime if $\gcd(A) = 1$, meaning that the $\gcd$ of elements of $A$ is $1$. This notion was first introduced by Nathanson \cite{1}. Let $g(n)$ denote the number of relatively prime subsets of $ \{1, 2, \dots, n\}$. Nathanson also defined analogues of Euler’s totient function $\Phi(n)$ which counts the number of nonempty sets $A \subseteq \{1, 2, \dots, n\}$ for which $\gcd(A)$ is relatively prime to $n$. These functions satisfy Möbius inversion formulas, which are the following: 
\begin{theorem}[\cite{1}, Theorem 1, Theorem 3] For all positive integers $n$, we have
    $$\Phi(n) = \sum_{d \mid n} \mu(d) \cdot 2^{n/d}, \quad g(n)=\sum_{d=1}^{n}\mu(d)\big(2^{\lfloor n/d\rfloor}-1\big),$$
    where $\mu$ is the Möbius function and $\lfloor x \rfloor$ denotes the greatest integer less than or equal to $x$. 
\end{theorem}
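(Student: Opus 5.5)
Both identities follow from inclusion--exclusion over the possible common divisor $d$: count subsets by their gcd, then invert with the Möbius function.

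\textbf{Formula for $\Phi(n)$.} For $d\mid n$, let $N(d)$ be the number of nonempty $A\subseteq\{1,\dots,n\}$ all of whose elements are divisible by $d$. Such sets are exactly the nonempty subsets of $\{d,2d,\dots,(n/d)d\}$, so
\[
N(d)=2^{n/d}-1.
\]
A nonempty set $A$ has $\gcd(\gcd(A),n)=1$ if and only if no prime $p\mid n$ divides every element of $A$. Inclusion--exclusion over the squarefree divisors of $n$ then gives
\[
\Phi(n)=\sum_{d\mid n}\mu(d)\,N(d).
\]
Equivalently, for a fixed nonempty $A$ put $e=\gcd(\gcd(A),n)$; then $\sum_{d\mid e}\mu(d)=[e=1]$, and summing over all $A$ while swapping the order of summation yields the same identity. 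Substituting $N(d)$ gives
\[
\Phi(n)=\sum_{d\mid n}\mu(d)\,2^{n/d}-\sum_{d\mid n}\mu(d).
\]
The second sum equals $[n=1]$. For $n>1$ it vanishes and the stated formula follows. The case $n=1$ needs a separate check: there the sum $\sum_{d\mid 1}\mu(d)2^{1/d}$ equals $2$, whereas the only nonempty subset of $\{1\}$ is $\{1\}$, so $\Phi(1)=1$.

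This $n=1$ discrepancy is the only real obstacle. I would resolve it by adopting Nathanson's convention that the empty set is counted, with $\gcd(\emptyset)=0$ and $\gcd(0,1)=1$. Counting $\emptyset$ raises each $N(d)$ to $2^{n/d}$, which yields the formula uniformly for all $n$. The empty set contributes only when $n=1$, because for $n>1$ we have $\gcd(0,n)=n\neq1$. So for $n>1$ the convention changes nothing, and it accounts for the value $2$ at $n=1$.

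\textbf{Formula for $g(n)$.} For any $d\ge1$, the nonempty subsets of $\{1,\dots,n\}$ whose elements are all divisible by $d$ are the nonempty subsets of $\{d,2d,\dots,\lfloor n/d\rfloor d\}$. There are $2^{\lfloor n/d\rfloor}-1$ of them, and this count is $0$ when $d>n$. A nonempty set $A$ is relatively prime exactly when $\sum_{d\mid \gcd(A)}\mu(d)=1$; otherwise this sum is $0$. Summing over all nonempty $A$ and interchanging the order of summation gives
\[
g(n)=\sum_{d\ge1}\mu(d)\bigl(2^{\lfloor n/d\rfloor}-1\bigr).
\]
The terms with $d>n$ vanish, so the sum can be truncated at $d=n$, which is the stated formula. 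Here no convention issue arises, since $\gcd(A)$ ranges over all positive integers rather than divisors of $n$ and the empty set is excluded throughout.
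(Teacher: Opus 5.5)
The paper gives no proof of this statement; it quotes it from Nathanson, so there is no in-paper argument to compare against. Your Möbius-inversion argument is the standard proof and it is correct. The $g(n)$ identity holds for every $n\ge 1$ exactly as you derive it, including $g(1)=1$. For $\Phi$ you correctly obtain $\Phi(n)=\sum_{d\mid n}\mu(d)2^{n/d}-[n=1]$.

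Your diagnosis at $n=1$ is also right. Under the definition used in this paper, where $\Phi(n)$ counts \emph{nonempty} sets, $\Phi(1)=1$ while the right-hand side equals $2$. So the first identity, as stated ``for all positive integers $n$'', is false at $n=1$. This matches the paper's own remark that A027375 enumerates $\Phi(n)$ only for $n\ge 2$.

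Counting $\emptyset$ with $\gcd(\emptyset)=0$ does make the formula uniform. In the context of this paper, however, that is a redefinition of $\Phi$, since the paper explicitly restricts to nonempty sets, rather than a proof of the statement as written. The accurate conclusion is therefore: the $\Phi$-formula holds for all $n\ge 2$, and at $n=1$ only under that convention.

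The discrepancy is harmless for the rest of the paper. $\Phi$ is only ever used at arguments $\ge 2$ (Lemma \ref{lemma1.5}, Lemma \ref{c1}, and values at primes and their products), or in statements insensitive to finitely many terms.
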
 
\noindent
Sloane’s sequence \href{https://oeis.org/A085945}{A085945} enumerates $g(n)$ and \href{https://oeis.org/A027375}{A027375} enumerates $\Phi(n)$ for $n\geq2$. Jordan’s totient function, denoted by $J_k(n)$ (where $k$ is a positive integer), is a generalization of Euler’s totient function. Jordan's totient function $J_k$ is a multiplicative arithmetic function and may be evaluated as
$$J_k(1)=1,\quad J_k=n^k\prod_{p\mid n}\big(1-\frac{1}{p^k}\big)~~\text{for~~}n\geq2.$$
On the other hand, the unitary analogue of the Euler totient function $\varphi^{*}(n)$ is defined as
$$\varphi^*(1)=1,\quad \varphi^{*}(n)=\prod_{p^{\alpha}\mid\mid n}\big(p^{\alpha}-1\big)~~\text{for~~}n\geq2.$$
One can note that $\varphi^*(n)$ is also multiplicative.

Studying the sequences $\{\Phi(n)\}_{n\geq1}$ and $\{g(n)\}_{n\geq1}$ is not new, for example, Ayad and Kihel \cite{r1} proved the following theorem for the sequence $\{g(n) \pmod{m}\}_{n\geq1}$.
\begin{theorem}[\cite{r1}, Theorem 3]
$\{g(n) \pmod{m}\}_{n\geq1}$ is a non-periodic sequence for prime numbers $m\neq3$.
\end{theorem}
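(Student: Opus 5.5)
The plan is to pass from $g$ to Nathanson's $\Phi$ by a telescoping identity, and then break periodicity of $\Phi \pmod{m}$ by choosing suitable primes via Dirichlet's theorem. First, by the Möbius formula for $g(n)$ quoted above, for $n\geq2$ we have $\lfloor n/d\rfloor-\lfloor (n-1)/d\rfloor=1$ if $d\mid n$ and $0$ otherwise. Hence
$$g(n)-g(n-1)=\sum_{d\mid n}\mu(d)\bigl(2^{n/d}-2^{n/d-1}\bigr)=\tfrac12\,\Phi(n).$$
So if $g(n+L)\equiv g(n)\pmod m$ for all $n\ge N$, then $\tfrac12\Phi(n)$ is eventually periodic mod $m$ with period $L$. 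For odd prime $m$ this means $\Phi(n)$ itself is eventually periodic mod $m$. It therefore suffices to exhibit $n\equiv n'\pmod L$, both large, with $\Phi(n)\not\equiv\Phi(n')\pmod m$.

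Let $e$ be the multiplicative order of $2$ mod $m$ (with $m$ odd prime) and put $M=Le$. Two observations drive the argument. If $n$ is squarefree and every divisor $n/d$ is $\equiv 1\pmod e$, then $\Phi(n)\equiv 2\sum_{d\mid n}\mu(d)=0\pmod m$. More generally, for distinct primes $q,s$ we have $\Phi(q)=2^q-2$ and $\Phi(qs)=2^{qs}-2^q-2^s+2$. By Dirichlet's theorem, choose a large prime $q'\equiv1\pmod M$, so that $\Phi(q')\equiv0\pmod m$. Next choose large primes $q\equiv -1\pmod M$ and $s\equiv -1\pmod M$. Then $n=qs\equiv1\pmod M$, so in particular $n\equiv q'\pmod L$. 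Reducing exponents mod $e$ gives
$$\Phi(qs)\equiv 2^{1}-2^{-1}-2^{-1}+2=4-1=3\pmod m .$$
Since $m$ is a prime different from $3$, this is nonzero mod $m$. So $\Phi(qs)\not\equiv\Phi(q')$ even though $qs\equiv q'\pmod L$, contradicting eventual periodicity. This is exactly where the hypothesis $m\ne3$ enters: for $m=3$ one has $e=2$, and every such choice collapses to $0$.

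The case I expect to be the main obstacle is $m=2$. There the division by $2$ in $g(n)-g(n-1)=\tfrac12\Phi(n)$ means one must work with $\Phi(n)\pmod 4$, and powers $2^k$ with $k\ge2$ vanish mod $4$. The parity of $\tfrac12\Phi(n)$ then depends only on the small divisors $n/d\in\{1,2\}$, that is, on $n\equiv$ (odd, $2\cdot$odd, $4\mid n$). For example, $\tfrac12\Phi(q)$ is odd for an odd prime $q$, while $\tfrac12\Phi(4q)$ is even. The plan here is to find two integers in the same class mod $L$ but with different $2$-adic shape. I would try $n=4q$ versus $n=4q\,s$ with suitable primes $s\equiv1\pmod L$, or compare the shape $2\cdot\text{odd}$ against the shape $4\cdot\text{odd}$ by selecting primes in appropriate residue classes mod $L$. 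In each case I would compute $\tfrac12\Phi\pmod 2$ directly from $\sum_{d\mid n}\mu(d)2^{n/d-1}$. Getting the congruence classes mod $L$ to match when $L$ is even is the delicate point.
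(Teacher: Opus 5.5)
Your argument for odd primes $m\ge5$ is correct. The telescoping identity is the paper's Lemma \ref{lemma1.5}. Comparing $\Phi(q')\equiv0$ with $\Phi(qs)\equiv3\pmod m$, for primes $q'\equiv1$ and $q,s\equiv-1\pmod{Le}$, is a valid and self-contained alternative to the paper's route.

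The paper quotes this statement from Ayad and Kihel without proof. Its own extension (Theorem \ref{thm1.3}, for odd $m\ge5$) goes through Theorem \ref{thm1.4}, i.e.\ the abstract Theorem \ref{thm1.8} fed by the divisibility $\Phi(p)\mid\Phi(n)$ of Lemma \ref{c1}. It contrasts $\Phi(p_N)\equiv2^{-1}-2$ with $\Phi(p_Nq)\equiv0$. Your direct M\"obius computation makes Lemma \ref{c1} unnecessary and works unchanged for every odd $m\ge5$.

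The genuine gap is $m=2$. The statement includes it, and you leave it as an unexecuted plan resting on a false reduction.

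In $\tfrac12\Phi(n)=\sum_{d\mid n}\mu(d)2^{n/d-1}$, every term with $n/d\ge2$ is even, including $n/d=2$. Hence $\tfrac12\Phi(n)\equiv\mu(n)\pmod 2$. The parity is decided by whether $n$ is squarefree, not by the $2$-adic shape of $n$: for instance $\tfrac12\Phi(3)=3$ is odd while $\tfrac12\Phi(9)=252$ is even. This breaks both of your proposed comparisons:
\begin{itemize}
\item $4q$ versus $4qs$ gives even values on both sides, so no contradiction arises.
\item $2\cdot(\text{odd})$ versus $4\cdot(\text{odd})$ cannot be placed in one residue class mod $L$ once $4\mid L$.
\end{itemize}

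The missing step is short. Periodicity of $g\bmod 2$ with period $L$ gives $\mu(n+L)^2=\mu(n)^2$ for $n\ge N+1$. Take a prime $q>N$ and, by Dirichlet, a prime $s\equiv1\pmod L$. Then $qs^2\equiv q\pmod L$, while $\mu(q)^2=1\ne0=\mu(qs^2)^2$, a contradiction. The paper's own argument does not cover $m=2$ either.
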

In the other direction, Bachraoui and Luca \cite{r3} proved that none of the sequences $\{\Phi(n)\}_{n\geq1}$ and $\{g(n)\}_{n\geq1}$ is P-recursive. 
We begin with generalising  Ayad and Kihel's problem of the non-periodicity of the sequence $\{g(n)\pmod{m}\}_{n\geq1}$. Here is the statement of our result:
\begin{theorem}\label{thm1.3}
If $g(n)$ denotes the number of relatively prime subsets of $\{1, 2, \dots, n\}$ then 
$\{g(n) \pmod{m}\}_{n\geq1}$ is a non-periodic sequence for all odd positive integers $m\geq 5$.
\end{theorem}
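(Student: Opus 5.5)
The plan is to reduce the non-periodicity of $g(n)\pmod m$ to a statement about Nathanson's $\Phi(n)$, and then to contradict periodicity of $\Phi$ by choosing suitable primes via Dirichlet's theorem.

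\textbf{Step 1 (difference identity).} From the formula $g(n)=\sum_{d=1}^{n}\mu(d)\big(2^{\lfloor n/d\rfloor}-1\big)$ we get, for $n\ge 2$,
$$g(n)-g(n-1)=\sum_{d\le n}\mu(d)\big(2^{\lfloor n/d\rfloor}-2^{\lfloor (n-1)/d\rfloor}\big).$$
The term with $d=n$ is $\mu(n)(2^{1}-2^{0})=\mu(n)$ and agrees with the general formula below; any $d>n-1$ can only be $d=n$, so this edge case causes no trouble. For $d<n$ we have $\lfloor n/d\rfloor=\lfloor (n-1)/d\rfloor$ unless $d\mid n$, in which case $\lfloor (n-1)/d\rfloor=n/d-1$. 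Hence
$$g(n)-g(n-1)=\sum_{d\mid n}\mu(d)\,2^{n/d-1},\qquad\text{so}\qquad 2\big(g(n)-g(n-1)\big)=\Phi(n).$$
If $g(n+L)\equiv g(n)\pmod m$ for all $n\ge N$, then the differences are periodic too, so $\Phi(n+L)\equiv\Phi(n)\pmod m$ for all $n\ge N+1$. Thus it suffices to show that $\Phi(n)\pmod m$ is not eventually periodic for odd $m\ge5$. Note that $m$ odd makes $2^{-1}$ exist modulo $m$.

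\textbf{Step 2 (forcing a contradiction).} Let $r=\operatorname{ord}_m(2)$ and $M=Lr$. Eventual periodicity means that, for $n$ large, $\Phi(n)\bmod m$ depends only on $n\bmod M$. Moreover, $2^{e}\bmod m$ depends only on $e\bmod r$, and hence only on $e\bmod M$.

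First compute $\Phi$ on a large prime $p\equiv1\pmod M$, which exists by Dirichlet's theorem:
$$\Phi(p)=2^{p}-2\equiv 2-2=0\pmod m.$$

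Next take primes $p,q$, both $\equiv -1\pmod M$, large and distinct (again by Dirichlet). Then $n=pq\equiv 1\pmod M$ and
$$\Phi(pq)=2^{pq}-2^{q}-2^{p}+2\equiv 2-2^{-1}-2^{-1}+2=3\pmod m.$$
Periodicity on the class $1\bmod M$ forces $3\equiv0\pmod m$. This is impossible because $m\ge5$ (for $m=9$, for instance, $3\not\equiv0$).

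\textbf{Main obstacle.} The delicate point is Step 1: getting the floor-function bookkeeping exactly right, including the edge term $d=n$, so that the identity $\Phi(n)=2(g(n)-g(n-1))$ is clean. After that, the key choice is a composite $n$ in the same residue class as a prime whose divisor-exponents $n/d$ are not all $\equiv1\pmod r$. The pair of primes $\equiv-1\pmod M$ achieves this.

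The hypothesis on $m$ enters in two places. Oddness of $m$ is needed for $2^{-1}$ to exist modulo $m$ and for $\operatorname{ord}_m(2)$ to be defined; indeed for even $m$ the difference identity gives no information about $g$ modulo $m$. The condition $m\ge5$ rules out $m\mid 3$. This matches the exclusion of $m=3$ in the result of Ayad and Kihel.
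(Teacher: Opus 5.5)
Your proof is correct. The reduction matches the paper's. You pass from $g$ to $\Phi$ through $\Phi(n)=2\bigl(g(n)-g(n-1)\bigr)$; the paper simply quotes this as Lemma~\ref{lemma1.5} (Ayad--Kihel), while you rederive it from the M\"obius formula and handle the $d=n$ term correctly.

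The real difference is in how you show that $\Phi \bmod m$ is not eventually periodic. The paper gets this as Theorem~\ref{thm1.4} from its general criterion, Theorem~\ref{thm1.8}:
\begin{itemize}
\item it fixes a prime $p_N\ge N$ with $m\nmid \Phi(p_N)$; primes $p\equiv -1\pmod{\varphi(m)}$ work, since then $\Phi(p)\equiv (m-3)/2\not\equiv 0$;
\item it picks a prime $q\equiv 1\pmod{\varphi(m)L}$, so that $p_Nq\equiv p_N\pmod L$;
\item it uses the divisibility $\Phi(q)\mid\Phi(p_Nq)$ of Lemma~\ref{c1}, imported from earlier work, together with $m\mid 2^q-2=\Phi(q)$, to force $\Phi(p_Nq)\equiv 0\not\equiv\Phi(p_N)$.
\end{itemize}
You instead compare a prime $\equiv 1\pmod M$, where $\Phi\equiv 0$, with a product $pq$ of two primes $\equiv -1\pmod M$. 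You then evaluate $\Phi(pq)=2^{pq}-2^{p}-2^{q}+2\equiv 3\pmod m$ directly.

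Your route is self-contained: it needs only the M\"obius formula for $\Phi$ and the invertibility of $2$ modulo $m$. The paper's route relies on the divisibility lemma. In exchange it packages the argument into a reusable criterion, which the paper also applies to $\varphi$, $J_k$ and $\varphi^*$.

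One small correction to your closing remark. The reduction in Step~1 works for every $m$; periodicity of $g$ modulo $m$ even gives periodicity of $\Phi$ modulo $2m$. So it is not true that the identity gives no information for even $m$. Oddness of $m$ is used only in Step~2.
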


In addition, we prove the non-periodicity property for the sequence arising from Nathanson’s totient function $\Phi(n)$, as well as the classical Euler's totient function $\varphi(n)$, sum of divisor function $\sigma(n)$, their Dirichlet convolution $\sigma * \varphi(n)$, Jordan's totient function $J_k$, and unitary totient function modulo $m$. The following are our theorems in this direction:
\begin{theorem}\label{thm1.4}
$\{\Phi(n) \pmod{m}\}_{n\geq 1}$ is a non-periodic sequence for all odd positive integers $m\geq 5$.
\end{theorem}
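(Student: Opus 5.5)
The plan is to use Dirichlet's theorem on primes in arithmetic progressions together with the explicit formula $\Phi(n)=\sum_{d\mid n}\mu(d)2^{n/d}$ from \cite{1}. I will evaluate it only at primes and at products of two distinct primes. For distinct primes $p,q$ the formula gives
$$\Phi(p)=2^p-2,\qquad \Phi(pq)=2^{pq}-2^p-2^q+2 .$$
Since $m$ is odd, $2$ is invertible modulo $m$. Let $e$ be the multiplicative order of $2$ modulo $m$. Then $2^n \pmod m$ depends only on $n \pmod e$, and this also holds for negative exponents once they are read as powers of $2^{-1}$.

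Suppose, for contradiction, that $\Phi(n+L)\equiv\Phi(n)\pmod m$ for all $n\ge N$. Put $T=\operatorname{lcm}(L,e)$. Then $\Phi(n)\pmod m$ depends only on $n \pmod T$ for $n\ge N$, and $2^n \pmod m$ depends only on $n \pmod T$. The idea is to find two large integers in the same residue class modulo $T$, one prime and one a product of two primes, whose $\Phi$-values differ modulo $m$.

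I choose residues $a=1$ and $b=T-1$ modulo $T$; both are coprime to $T$.
\begin{enumerate}
\item By Dirichlet's theorem, take distinct primes $p\equiv 1$ and $q\equiv -1 \pmod T$, both larger than $N$. Then $n_1=pq\equiv -1 \pmod T$.
\item Since $-1$ is coprime to $T$, Dirichlet's theorem also gives a prime $\ell\equiv -1\pmod T$ with $\ell\ge N$. So $\ell\equiv n_1 \pmod T$.
\item Periodicity then forces $\Phi(\ell)\equiv\Phi(pq)\pmod m$.
\end{enumerate}

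Now reduce the exponents modulo $e$, which divides $T$. This gives
$$\Phi(\ell)\equiv 2^{-1}-2,\qquad \Phi(pq)\equiv 2^{-1}-2-2^{-1}+2=0 \pmod m,$$
where $2^{-1}$ is the inverse of $2$ modulo $m$. So the congruence says $2^{-1}\equiv 2\pmod m$, that is, $4\equiv 1\pmod m$, i.e.\ $m\mid 3$. This is impossible for $m\ge5$, which gives the contradiction.

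The main point needing care is the bookkeeping: each exponent ($p$, $q$, $pq$, $\ell$) must be reduced modulo $e$ correctly, and the negative residue $-1$ must be interpreted through $2^{-1}$. Both are legitimate because $m$ is odd and $e\mid T$. One must also make sure that $\ell$ and $pq$ can be taken beyond $N$ in the same residue class modulo $T$, which Dirichlet's theorem supplies. This is also where the hypotheses enter. Oddness of $m$ is needed to invert $2$. The bound $m\ge5$ excludes $m=3$ (and $m=1$), where $4\equiv1$ and this particular pair of residues gives no contradiction.
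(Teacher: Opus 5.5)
Your proof is correct, and the mechanism is the same as the paper's. A prime $q$ with $\Phi(q)\not\equiv 0 \pmod m$ is compared with a product $pq$, where $p$ is a prime that is $\equiv 1$ both modulo the period and modulo the order of $2$ (the paper uses $\varphi(m)$, of which your $e$ is a divisor). Then $pq$ lies in the same class as $q$ modulo the period, but $\Phi(pq)\equiv 0 \pmod m$.

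The difference is how you get that vanishing. The paper routes the argument through its general criterion, Theorem~\ref{thm1.8}, with $K(m)=\varphi(m)$. It obtains $\Phi(p_Nq)\equiv 0$ from the divisibility property $\Phi(q)\mid\Phi(p_Nq)$ of Lemma~\ref{c1} (quoted from earlier work), together with $m\mid 2^q-2$ for $q\equiv 1 \pmod{\varphi(m)}$. You instead evaluate $\Phi(pq)=2^{pq}-2^p-2^q+2$ directly from Nathanson's M\"obius formula.

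That makes your argument self-contained: it needs neither Lemma~\ref{c1} nor the general criterion. It also lets you work with the single modulus $T=\mathrm{lcm}(L,e)$, instead of the nested choice of a prime $\equiv 1 \pmod{p_N\varphi(m)L}$. The paper's packaging pays off in reuse: the same criterion and divisibility mechanism give the results for $\varphi$, $J_k$ and $\varphi^*$ in Theorem~\ref{thm1.5}.

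One small simplification: the auxiliary prime $\ell$ is unnecessary. Since $p\equiv 1 \pmod T$, you already have $pq\equiv q \pmod T$, and $q>N$. So you can compare $\Phi(q)\equiv 2^{-1}-2$ with $\Phi(pq)\equiv 0$ directly, which is exactly the paper's comparison of $\Phi(p_N)$ with $\Phi(p_Nq)$.
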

\begin{theorem}\label{thm1.5} Under the above notations, we have the following
\noindent
\begin{enumerate}
    \item[\upshape(1)]  $\{\varphi(n) \pmod{m}\}_{n\geq 1}$ is a non-periodic sequence for all positive integers $m\geq 3$. 
    \item[\upshape(2)]  $\{\sigma(n) \pmod{m}\}_{n\geq 1}$ is a non-periodic sequence for all positive integers $m\geq 7$. 
    \item[\upshape(3)] $\{\sigma*\varphi(n) \pmod{m}\}_{n\geq 1}$ is a non-periodic sequence for all positive integers $m\geq 7$. 
    \item[\upshape(4)]  $\{J_k(n) \pmod{m}\}_{n\geq 1}$ is a non-periodic sequence for all positive integers $m\geq 3$ 
    and for all odd integers $k\geq3$.  
    \item[\upshape(5)]  $\{\varphi^*(n) \pmod{m}\}_{n\geq 1}$ is a non-periodic sequence for all positive integers $m\geq 3$.
\end{enumerate}
\end{theorem}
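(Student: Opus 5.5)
The plan is a single argument based on Dirichlet's theorem on primes in arithmetic progressions, applied uniformly to the five functions. Each of them is multiplicative and is given on primes by a polynomial $F$:
\begin{itemize}
\item $\varphi(p)=p-1$;
\item $\sigma(p)=p+1$;
\item $\sigma*\varphi(p)=\sigma(p)+\varphi(p)=2p$;
\item $J_k(p)=p^k-1$;
\item $\varphi^*(p)=p-1$.
\end{itemize}

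Suppose, for contradiction, that $\{f(n)\pmod m\}_{n\geq1}$ is eventually periodic, say $f(n+L)\equiv f(n)\pmod m$ for all $n\geq N$. Put $M=mL$. Then $f(n)\equiv f(n')\pmod m$ whenever $n,n'\geq N$ and $n\equiv n'\pmod M$.

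The key step is to compare a product of two distinct primes with a single prime in the same residue class modulo $M$. Take residues $a,b$ coprime to $M$. By Dirichlet's theorem, choose distinct primes $p_1,p_2,p>N$ with $p_1\equiv a$, $p_2\equiv b$ and $p\equiv ab\pmod M$. Then $p_1p_2\equiv p\pmod M$, so periodicity gives $f(p_1p_2)\equiv f(p)\pmod m$. Multiplicativity turns this into
$$F(p_1)F(p_2)\equiv F(p)\pmod m.$$
Since $F$ has integer coefficients and $m\mid M$, this reads $F(a)F(b)\equiv F(ab)\pmod m$. Hence it suffices, for each function, to exhibit units $a,b$ modulo $M$ with
$$F(a)F(b)\not\equiv F(ab)\pmod m.$$
Such residues are obtained by taking $a,b\equiv\pm1\pmod M$, which are automatically coprime to $M$.

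The choices are then as follows.
\begin{itemize}
\item For $\varphi$ and $\varphi^*$ ($F(x)=x-1$), take $a=1$, $b=-1$. This gives $F(1)F(-1)=0$ against $F(-1)=-2$, which is nonzero modulo $m$ for $m\geq3$.
\item For $J_k$ with $k$ odd, take the same $a=1$, $b=-1$. Here $F(-1)=(-1)^k-1=-2$, so again we get $0\not\equiv-2\pmod m$. This is exactly where oddness of $k$ is used.
\item For $\sigma$ ($F(x)=x+1$), take $a=b=-1$. This gives $F(-1)^2=0$ against $F(1)=2\not\equiv0$ for $m\geq3$.
\item For $\sigma*\varphi$ ($F(x)=2x$), take $a=b=1$. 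This gives $4$ against $2$, which differ modulo every $m\geq3$.
\end{itemize}
In each case we get a contradiction. This proves all five parts, and in fact for every $m\geq3$, which covers the stated ranges $m\geq7$ in parts (2) and (3).

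The only points needing care are routine. The primes must be distinct and exceed $N$, which holds because each progression contains infinitely many primes. The conditions $a,b\equiv\pm1\pmod M$ make $a,b$ coprime to $M$, so Dirichlet's theorem applies. I expect no genuine obstacle. The step most worth double-checking is the reduction $F(p_i)\equiv F(a)\pmod m$, which holds because $M$ is a multiple of $m$; this is why the modulus is enlarged from $L$ to $mL$ at the start.
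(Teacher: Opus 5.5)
Your proof is correct and is essentially the paper's argument. The paper proves Theorem~\ref{thm1.9} by comparing $f(p_Nq)=f(p_N)f(q)$ with $f(p_N)$ for a Dirichlet prime $q\equiv 1\pmod{p_N mL}$; this is your criterion $F(a)F(b)\equiv F(ab)\pmod m$ in the special case $b=1$, with the comparison prime equal to $p_1$, and the paper's choice $r_1=0$, $r_2=m-2$ for parts (1), (4), (5) is your pair $\{1,-1\}$. Your freer choice of residues ($a=b=-1$ for $\sigma$, $a=b=1$ for $\sigma*\varphi$) is a mild generalization that gives parts (2) and (3) for every $m\geq 3$, sharpening the paper's stated range $m\geq 7$.
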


In order to prove the preceding theorems, we make use of the following general theorems.
\begin{theorem}\label{thm1.8}
 Let $f$ be an arithmetic function and $m$ be a positive integer such that $m\nmid f(p)$ for infinitely many primes $p$ and $m\mid f(p)$ for primes $p\equiv 1 \pmod{K(m)}$, where $K(m)$ is some positive integer-valued function of $m$. If $f$  has the property that $f(p)\mid f(n)$ whenever $p\mid n$, where $p$ is a prime, then $\{f(n) \pmod{m}\}_{n\geq1}$ is non-periodic.   
\end{theorem}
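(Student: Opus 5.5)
Suppose, for contradiction, that $\{f(n) \pmod{m}\}_{n\geq1}$ is eventually periodic: there are positive integers $N, L$ with $f(n+L)\equiv f(n)\pmod{m}$ for all $n\geq N$. Then $f(n+jL)\equiv f(n)\pmod m$ for all $n\ge N$ and all $j\ge 0$.

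Fix a prime $p$ with $m\nmid f(p)$ and $p > \max(N, L)$. Such a prime exists because there are infinitely many primes with $m\nmid f(p)$. Since $p>L$, any prime divisor of $\gcd(p,L)$ would have to be $p$ itself, which is impossible, so $\gcd(p,L)=1$. We will move from $p$ along the progression $p+jL$ and land on an integer $n=p+jL$ that is divisible by an auxiliary prime $q\equiv 1\pmod{K(m)}$. Then $m\mid f(q)$, and the divisibility property $f(q)\mid f(n)$ gives $m\mid f(n)$. Periodicity gives $f(n)\equiv f(p)\pmod m$, so $m\mid f(p)$, which is a contradiction.

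It remains to choose $q$ so that such a $j$ exists. By Dirichlet's theorem there are infinitely many primes $q\equiv 1\pmod{K(m)}$; choose one with $q\nmid L$, for instance $q>L$. Then $L$ is invertible modulo $q$, so the congruence $p+jL\equiv 0\pmod q$ has a solution $j\ge 0$, namely $j\equiv -pL^{-1}\pmod q$ taken in $[0,q-1]$. With $n=p+jL\ge p\ge N$, we have $q\mid n$. The hypothesis gives $f(q)\mid f(n)$, and $q\equiv1\pmod{K(m)}$ gives $m\mid f(q)$, so $m\mid f(n)$. Periodicity yields $f(p)\equiv f(n)\equiv 0\pmod m$, contradicting the choice of $p$.

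The only delicate point is ensuring the needed invertibility of $L$ modulo $q$ and that $p$ lies in the periodic range. Both are handled by taking $p>\max(N,L)$ and $q>L$, which is possible because each set of primes involved is infinite; for $q$ this uses Dirichlet's theorem for the progression $1 \pmod{K(m)}$. Note also that the divisibility $f(q)\mid f(n)$ should be read in the integers (with $0\mid 0$ allowed). The implication $m\mid f(q)\Rightarrow m\mid f(n)$ is valid in either case, since if $f(q)=0$ then $f(n)=0$.
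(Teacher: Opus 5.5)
Your proof is correct. The overall strategy matches the paper's: use periodicity to move from a prime $p$ with $m\nmid f(p)$ to an index in its progression that is divisible by a prime $q\equiv 1\pmod{K(m)}$, then invoke $f(q)\mid f(n)$. The difference is in how you find that index.

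The paper takes $j$ to be a multiple of $p_NK(m)$, so that $p_N+jL=p_N(1+K(m)j_NL)$. It then applies Dirichlet's theorem to the progression $1\pmod{p_NK(m)L}$ to make the cofactor itself prime, so it lands exactly on $n=p_Nq$.

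You instead fix one prime $q\equiv 1\pmod{K(m)}$ with $q>L$ in advance, using Dirichlet only for the fixed modulus $K(m)$. You then solve the linear congruence $p+jL\equiv 0\pmod{q}$, so your $n$ is just some multiple of $q$.

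For this theorem that is all that is needed, since the hypothesis only requires $q\mid n$. Your version is slightly leaner, and you correctly handle the case $f(q)=0$. Incidentally, the condition $p>L$ is never used; only $p\ge N$ and $q\nmid L$ matter.

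What the paper's construction buys is the exact factorization $n=p_Nq$ with $q\neq p_N$ (since $q\equiv 1\pmod{p_N}$), a squarefree product of two distinct primes. This lets the same template also prove Theorem~\ref{thm1.9}. There only multiplicativity on squarefree integers is available, so an arbitrary multiple of $q$ would not suffice.
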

\begin{theorem}\label{thm1.9}
Let $f$ be an arithmetic function and $m$ be a positive integer such that $f(p)\equiv r_1\pmod{m}$ for primes $p\equiv1\pmod{m}$, $f(p)\equiv r_2\pmod{m}$ for infinitely many primes $p$ such that $r_2,r_2r_1$ are distinct in $\mathbb{Z}/m\mathbb{Z}$. If $f$ is multiplicative function for square-free integers $n$ then $\{f(n) \pmod{m}\}_{n\geq1}$ is non-periodic.  
\end{theorem}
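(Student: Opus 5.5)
The plan is to argue by contradiction, using periodicity to compare the value of $f$ at a suitable prime $q$ with its value at a product $qp$ of two distinct primes. Suppose $\{f(n)\pmod{m}\}_{n\geq1}$ is eventually periodic: there are positive integers $N,L$ with $f(n+L)\equiv f(n)\pmod{m}$ for all $n\geq N$. Iterating, $f(n+tL)\equiv f(n)\pmod{m}$ for every $n\geq N$ and every integer $t\geq0$.

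\textbf{Choice of $q$.} By hypothesis there are infinitely many primes $q$ with $f(q)\equiv r_2\pmod{m}$. Choose one with $q>\max(N,L,m)$. Then $q\geq N$, and since $q$ is a prime larger than both $L$ and $m$, we have $\gcd(q,Lm)=1$.

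\textbf{Choice of $p$.} Let $M=\operatorname{lcm}(m,L)$. By Dirichlet's theorem on primes in arithmetic progressions, there is a prime $p\equiv 1\pmod{M}$ with $p>q$. In particular $p\equiv1\pmod{m}$, so the first hypothesis gives $f(p)\equiv r_1\pmod{m}$. Also $p\equiv1\pmod{L}$, so
$$qp=q+q(p-1)=q+tL,\qquad t=\frac{q(p-1)}{L}\in\mathbb{Z}_{\geq0}.$$
Since $q\geq N$, periodicity yields $f(qp)\equiv f(q)\equiv r_2\pmod{m}$.

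\textbf{Contradiction.} Because $p\neq q$, the integer $qp$ is square-free with coprime factors $p$ and $q$. Multiplicativity of $f$ on square-free integers then gives
$$f(qp)=f(q)f(p)\equiv r_2r_1\pmod{m}.$$
Combining this with the previous step gives $r_2\equiv r_2r_1\pmod{m}$. This contradicts the assumption that $r_2$ and $r_2r_1$ are distinct in $\mathbb{Z}/m\mathbb{Z}$. Hence the sequence is not eventually periodic, i.e.\ it is non-periodic.

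There is no serious obstacle in this argument. The only care needed is in choosing $p\equiv1$ modulo \emph{both} $m$ and $L$, which Dirichlet's theorem provides. This single choice makes $qp$ lie in the same residue class as $q$ modulo $L$ (so periodicity applies) while also forcing $f(p)\equiv r_1\pmod{m}$. Taking $q$ large guarantees $q\geq N$, so that periodicity is applicable at $q$.
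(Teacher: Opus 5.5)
Your proof is correct and takes essentially the same approach as the paper. Both fix a prime $q\ge N$ with $f(q)\equiv r_2\pmod{m}$, use Dirichlet's theorem to get a prime $p\equiv 1$ modulo both $m$ and $L$ so that $qp\equiv q\pmod{L}$, and then compare $f(qp)$ computed via periodicity with $f(qp)$ computed via multiplicativity; the paper takes $p\equiv 1\pmod{p_N mL}$, which also forces $p\neq q$, whereas you take $p\equiv1\pmod{\mathrm{lcm}(m,L)}$ with $p>q$, and your condition $\gcd(q,Lm)=1$ is never actually used.
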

The second objective of this article is to construct transcendental numbers using known arithmetic functions. A transcendental number is a real or complex number that is not algebraic, that is, not a root of a non-zero polynomial with integer (or, equivalently, rational) coefficients. The well known transcendental numbers are $\pi$ and $e$. All transcendental real numbers are irrational numbers, since all rational numbers are algebraic. The converse is not true, that is, not all irrational numbers are transcendental. The following theorem gives example of infinitely many transcendental numbers generating from the sequences in Theorems \ref{thm1.3}, \ref{thm1.4}, \ref{thm1.5}, \ref{theorem tau}, and \ref{theorem es}.

\begin{theorem}\label{thm1.10}
Under the above notations, let $m$ and $k\geq2$ be two positive integers such that $\alpha_{k,m}$ is a real number whose continued fraction is given by $[0;d_{1,k,m},d_{2,k,m},\ldots]$. Then $\alpha_{k,m}$ is a transcendental number for the following values of $d_{n,k,m}$ and $m$:
\begin{enumerate}
   \item[\upshape(1)] $$d_{n,k,m}=1+\biggl(\left[n\sum_{r\geq1}\frac{\tau(r)\pmod{m}}{10^r}\right] \pmod k\biggl)\quad \text{where~}m=5,7,8,9.$$
       \item[\upshape(2)] $$d_{n,k,m}=1+\biggl(\left[n\sum_{r\geq1}\frac{a(r)\pmod{m}}{10^r}\right] \pmod k\biggl)\quad \text{where~}m=7.$$
        \item[\upshape(3)] $$d_{n,k,m}=1+\biggl(\left[n\sum_{r\geq1}\frac{b(r)\pmod{m}}{10^r}\right] \pmod k\biggl)\quad \text{where~}m=5.$$
         \item[\upshape(4)] $$d_{n,k,m}=1+\biggl(\left[n\sum_{r\geq1}\frac{c(r)\pmod{m}}{10^r}\right] \pmod k\biggl)\quad \text{where~}m=7.$$
          \item[\upshape(5)] $$d_{n,k,m}=1+\biggl(\left[n\sum_{r\geq1}\frac{d(r)\pmod{m}}{10^r}\right] \pmod k\biggl)\quad \text{where~}m=5,7.$$
           \item[\upshape(6)] $$d_{n,k,m}=1+\biggl(\left[n\sum_{r\geq1}\frac{e(r)\pmod{m}}{10^r}\right] \pmod k\biggl)\quad \text{where~}m=5,7.$$
     \item[\upshape(7)] $$d_{n,k,m}=1+\biggl(\left[n\sum_{r\geq1}\frac{\big(\Phi(r)\pmod{m}\big)\pmod{10}}{10^r}\right] \pmod k\biggl)$$ \\ \quad $\text{where~}m\geq5\text{~is odd with~~}m\not\equiv3\pmod{10}.$
    \item[\upshape(8)] $$d_{n,k,m}=1+\biggl(\left[n\sum_{r\geq1}\frac{\big(g(r)\pmod{m}\big)\pmod{10}}{10^r}\right] \pmod k\biggl)$$\\ \quad $\text{where odd~}m\geq5~\text{is odd with~~}m\not\equiv3\pmod{10}.$
    \item[\upshape(9)] $$d_{n,k,m}=1+\biggl(\left[n\sum_{r\geq1}\frac{\varphi(r)\pmod{m}}{10^r}\right] \pmod k\biggl)\quad \text{where~}3\leq m\leq 10.$$
    \item[\upshape(10)] $$d_{n,k,m}=1+\biggl(\left[n\sum_{r\geq1}\frac{\sigma(r)\pmod{m}}{10^r}\right] \pmod k\biggl) \quad \text{where~} 7\leq m\leq 10.$$
    \item[\upshape(11)] $$d_{n,k,m}=1+\biggl(\left[n\sum_{r\geq1}\frac{\sigma*\varphi(r)\pmod{m}}{10^r}\right] \pmod k\biggl)\quad \text{where~}7\leq m\leq 10.$$
    \item[\upshape(12)] $$d_{n,k,m}=1+\biggl(\left[n\sum_{r\geq1}\frac{J_{v}(r)\pmod{m}}{10^r}\right] \pmod k\biggl)$$\\ \quad $\text{where~}3\leq m\leq 10~\text{and odd integer~} v\geq3.$
     \item[\upshape(13)] $$d_{n,k,m}=1+\biggl(\left[n\sum_{r\geq1}\frac{\varphi^{*}(r)\pmod{m}}{10^r}\right] \pmod k\biggl)\quad \text{where~}3\leq m\leq 10.$$
    \end{enumerate}
\end{theorem}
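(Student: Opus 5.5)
The proof will reduce every case to a single transcendence criterion for continued fractions whose partial quotients are $1+(\lfloor n\theta\rfloor \bmod k)$. The natural tool is the result of Bugeaud and coauthors, which builds on the Schmidt subspace theorem for quasi-palindromic or Sturmian-type continued fractions. It states that if $\theta$ is an irrational real number and $k\ge 2$, then the number $[0;d_1,d_2,\ldots]$ with $d_n=1+(\lfloor n\theta\rfloor \bmod k)$ is transcendental. This sequence is a coding of a rotation by $\theta$, so it has linear subword complexity and contains many repetitions near its beginnings. By the combinatorial transcendence criterion of Adamczewski and Bugeaud for continued fractions with bounded partial quotients, or alternatively by its known special form for these $\lfloor n\theta\rfloor$ sequences, the continued fraction is transcendental, since it is not eventually periodic when $\theta$ is irrational. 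I will quote this criterion as an external theorem. Once it is in hand, all that remains in each case is to check that
$$\theta_m=\sum_{r\ge1}\frac{u_r}{10^r}$$
is irrational, where $u_r$ denotes the relevant residue.

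For each item I first check that $u_r\in\{0,1,\ldots,9\}$, so that $\theta_m$ is literally the number whose decimal expansion is $0.u_1u_2u_3\ldots$.
\begin{itemize}
\item In items (1)--(6) and (9)--(13), the modulus satisfies $m\le 10$ (namely $m=5,7,8,9$, or $3\le m\le10$, or $7\le m\le 10$), so the least nonnegative residue mod $m$ lies in $\{0,\ldots,9\}$.
\item In items (7) and (8), the extra reduction mod $10$ forces this directly.
\end{itemize}
Then $\theta_m$ is rational if and only if its decimal digit sequence is eventually periodic. So irrationality of $\theta_m$ is equivalent to non-periodicity of $(u_r)$.

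Non-periodicity follows from the earlier theorems:
\begin{itemize}
\item Items (1)--(6) use Theorem \ref{theorem tau} and the relevant parts of Theorem \ref{theorem es}. For instance, $m=7$ satisfies $\gcd(7,240)=1$, $m=5$ satisfies $\gcd(5,504)=1$, and $m=5,7$ avoid the excluded values $11$, $43$, $19$ and $2731$.
\item Items (9)--(13) use Theorem \ref{thm1.5}.
\end{itemize}

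The delicate cases are (7) and (8). There the digit is $(\Phi(r)\bmod m)\bmod 10$, and a further reduction mod $10$ could in principle destroy non-periodicity. My plan is to show that the map $x\mapsto x\bmod 10$ on $\{0,\ldots,m-1\}$ still separates enough residues for the argument behind Theorems \ref{thm1.4} and \ref{thm1.3} to survive.

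I will revisit those proofs, presumably via Theorem \ref{thm1.8} or \ref{thm1.9}. They exhibit two specific residues: one that occurs along an arithmetic progression of indices that any period would have to preserve, and a different residue that is forced at a shifted index. In the setup of Theorem \ref{thm1.9} these are $r_2$ and $r_2r_1$, and they are the values that must remain distinct after reduction mod $10$. I expect one of them to be $0$ or $m-1$, or some similarly small value, in which case the condition $m\not\equiv 3\pmod{10}$ is exactly what rules out a collision. For example, $2^p\equiv 2$ and $2^{p}-2$-type values mod $p$ lead to residues such as $m-1$ versus $2$, and these collide mod $10$ precisely when $m\equiv3\pmod{10}$. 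Checking that the collision occurs only in this case is the main obstacle, and I would do it by redoing the residue computation in the proof of Theorem \ref{thm1.4} explicitly.

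Once every $\theta_m$ is shown to be irrational, the cited criterion gives the transcendence of $\alpha_{k,m}$ for every $k\ge2$.
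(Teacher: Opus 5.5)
Items (1)--(6) and (9)--(13) follow the paper exactly. The digits lie in $\{0,\ldots,9\}$, so irrationality of $\theta_m$ is equivalent to non-periodicity. Theorems \ref{theorem tau}, \ref{theorem es} and \ref{thm1.5} supply that, and Theorem \ref{thm*} finishes.

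Item (7) also goes through, but not with the residues you guess. Since $\Phi$ is not multiplicative on squarefree integers ($\Phi(6)=54\neq 12=\Phi(2)\Phi(3)$), the relevant template is Theorem \ref{thm1.8}, not Theorem \ref{thm1.9}. Take a large prime $p\equiv -1\pmod{\varphi(m)}$ and a prime $q\equiv 1\pmod{\varphi(m)L}$. Then $p$ and $pq$ are congruent mod $L$, while $\Phi(pq)\equiv 0$ and $\Phi(p)\equiv 2^{-1}-2\equiv \frac{m-3}{2}\pmod m$. These residues share a last digit iff $m\equiv 3\pmod{20}$, which the hypothesis excludes. This is the paper's Lemma \ref{lemma3.1} with $\lambda=1$.

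The genuine gap is item (8). The argument behind Theorem \ref{thm1.3} is differencing: periodicity of $g \bmod m$ forces periodicity of $\Phi(n)=2(g(n)-g(n-1)) \bmod m$. Once you compose with $D(x)=(x \bmod m)\bmod 10$, this step is unavailable. For $m>10$, the digits $D(g(n))$ and $D(g(n-1))$ do not determine $D(g(n)-g(n-1))$: for $m=11$ the pairs $(1,0)$ and $(12,10)$ have equal digits but differences $1$ and $2$. So eventual periodicity of the digits of $g$ tells you nothing about residues of $\Phi$. No check that two $\Phi$-residues stay distinct mod $10$ can close the argument.

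The paper's Lemma \ref{lemma3.2} makes precisely this cancellation, so it cannot be imported either. Its input, Lemma \ref{lemma3.1} with $\lambda=\tfrac12$, also fails at $m=41$: for $p\equiv -1\pmod{40}$ one has $\tfrac12\Phi(p)\equiv 30\pmod{41}$, whose last digit is $0$.

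For $m\in\{5,7,9\}$, $D$ is injective on residues and Theorem \ref{thm1.3} suffices. For $m\ge 11$ you need an argument run on $g$ itself. One option is to compare the index pairs $(p-1,p)$ and $(pq-1,pq)$, with $q\equiv 1\pmod{\varphi(m)L}$ prime:
\begin{itemize}
\item By Lemmas \ref{c1} and \ref{lemma1.5}, $m\mid \tfrac12\Phi(pq)=g(pq)-g(pq-1)$, so periodicity yields $D(g(p))=D(g(p-1))$.
\item On the other hand, $g(p)-g(p-1)\equiv c:=2^{p-1}-1\pmod m$.
\item If the residue $c$ satisfies $c\not\equiv 0$ and $c\not\equiv m \pmod{10}$, then $x$ and $(x+c)\bmod m$ never share a last digit, a contradiction.
\end{itemize}
Taking $p\equiv 3\pmod{\varphi(m)}$ gives $c=3$, which works exactly when $m\not\equiv 3\pmod{10}$. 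However, this class contains large primes only if $3\nmid\varphi(m)$, so other $m$ require a different choice of residue class for $p$.
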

Here, and throughout the paper, $[x]$ denotes the integer part of the real number $x$.

\section{Preliminaries}\label{sec2}
In this section, we record several auxiliary results that will be used frequently in the subsequent proofs.\\
The Ramanujan tau-function $\tau(n)$ has many remarkable congruence properties such as:
\begin{theorem}[\cite{newnew, new1,murty}]\label{murty}We have
\noindent
$$\tau(n) \equiv
\begin{cases}
n\,\sigma_{1}(n) \pmod{5},\\[6pt]
n\,\sigma_{3}(n) \pmod{7},\\[6pt]
\sigma_1(n) \pmod{8}~~\text{~~~~if~~}\gcd(n,2)=1,\\[6pt]
n^{2}\sigma_{1}(n) \pmod{9},\\[6pt]
\sigma_{11}(n) \pmod{691}.
\end{cases}$$
\end{theorem}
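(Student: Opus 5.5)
The plan is to derive all five congruences from identities among Eisenstein series, reducing coefficients modulo the relevant modulus. Write $E_2=1-24\sum\sigma_1(n)q^n$, $E_4=1+240\sum\sigma_3(n)q^n$, $E_6=1-504\sum\sigma_5(n)q^n$ and $\theta=q\,\frac{d}{dq}$. I will use two standard inputs, taken as known:
\begin{itemize}
\item the identity $1728\Delta=E_4^3-E_6^2$;
\item Ramanujan's differential equations $\theta E_2=(E_2^2-E_4)/12$, $\theta E_4=(E_2E_4-E_6)/3$ and $\theta E_6=(E_2E_6-E_4^2)/2$.
\end{itemize}
Throughout, congruences between $q$-series with $p$-integral rational coefficients are coefficientwise. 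The only division performed modulo a prime $p$ is by units such as $12$ or $3$ when $p\ne 2,3$.

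\textbf{Modulo 5.} Since $5\mid 240$ we have $E_4\equiv 1$. Since $504\equiv -1\pmod 5$ and $\sigma_5(n)\equiv\sigma_1(n)\pmod 5$ by Fermat, we get $E_6\equiv E_2\pmod 5$. Then
\[
\theta E_2=\frac{E_2^2-E_4}{12}\equiv\frac{E_6^2-E_4^3}{12}=-144\Delta \pmod 5 .
\]
Comparing the coefficient of $q^n$ gives $-24\,n\sigma_1(n)\equiv -144\,\tau(n)$. Since $144/24=6\equiv 1$, this yields $\tau(n)\equiv n\sigma_1(n)\pmod 5$.

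\textbf{Modulo 7.} Now $E_6\equiv 1$ because $7\mid 504$. Also $E_2\equiv E_4^2\pmod 7$: this is the $p=7$ case of $E_{p+1}\equiv E_2$ with $E_8=E_4^2$, checked coefficientwise from $480\sigma_7(n)\equiv -24\sigma_1(n)\pmod 7$. Hence
\[
\theta E_4=\frac{E_2E_4-E_6}{3}\equiv\frac{E_4^3-E_6^2}{3}=576\Delta \pmod 7 .
\]
Comparing coefficients gives $240\,n\sigma_3(n)\equiv 576\,\tau(n)$, and both constants are $\equiv 2\pmod 7$, so $\tau(n)\equiv n\sigma_3(n)$.

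\textbf{Modulo 691.} Use $E_{12}=(441E_4^3+250E_6^2)/691$ together with $E_{12}=1+\frac{65520}{691}\sum\sigma_{11}(n)q^n$. Combining with $1728\Delta=E_4^3-E_6^2$ gives
\[
691E_{12}-691E_6^2=441\cdot1728\,\Delta .
\]
Multiply out the $q^n$ coefficient, reduce modulo $691$, and use $65520\equiv 441\cdot 1728$ up to the needed unit. The $E_6^2$ term dies modulo $691$. This gives $\tau(n)\equiv\sigma_{11}(n)\pmod{691}$. The only work here is a finite check of these integer constants.

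\textbf{Moduli 8 and 9 (the main obstacle).} These are not primes, so the mod-$p$ Eisenstein reductions above give only the mod-$2$ and mod-$3$ information. For these I would work directly with the product $\Delta=q\prod(1-q^n)^{24}$.
\begin{itemize}
\item \emph{Modulo 9.} Write $(1-q^n)^{24}=\big((1-q^n)^{3}\big)^{8}$. Use $(1-x)^3\equiv 1-x^3-3x(1-x)\pmod 9$ and Jacobi's identity $\prod(1-q^n)^3=\sum(-1)^k(2k+1)q^{k(k+1)/2}$. The aim is to reduce to an expression matching $\theta^2 E_2$ modulo $9$.
\item \emph{Modulo 8.} Use $(1-q^n)^{8}\equiv(1-q^{2n})^{4}\pmod 8$ to reach $\Delta\equiv q\prod(1-q^{2n})^{12}\pmod 8$. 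Then restrict to odd $n$, where the target is $\sigma_1(n)$.
\end{itemize}
If these manipulations become unwieldy, the fallback is to invoke the classical results of Ramanujan and Kolberg, as the citation suggests. This is the step where I expect the real effort to lie.
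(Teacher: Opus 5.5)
The paper gives no proof of this statement; it quotes the five congruences from the cited literature. Your Eisenstein-series derivations for $m=5,7,691$ are therefore a genuinely different, self-contained route, and they are correct. The congruences $E_6\equiv E_2\pmod 5$ and $E_8=E_4^2\equiv E_2\pmod 7$ check coefficientwise. The constants match: $144\equiv 24\pmod 5$ and $240\equiv 576\equiv 2\pmod 7$. For $691$ no extra unit is needed, since $441\cdot1728-65520=691\cdot1008$ exactly and $65520\equiv566\not\equiv0\pmod{691}$.

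For $m=8$ and $m=9$, however, you do not yet have a proof, and the mod $9$ plan aims at the wrong target. We have $\theta^2E_2=-24\sum n^2\sigma_1(n)q^n$, and $24$ is not a unit modulo $9$. So no congruence $\Delta\equiv c\,\theta^2E_2\pmod 9$ can hold: the $q$-coefficient would need $1\equiv-24c$. Any comparison with $\theta^2E_2$ only sees $n^2\sigma_1(n)$ modulo $3$.

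A fix stays inside your framework. Ramanujan's equations give $\theta^2E_2=(E_2^3-3E_2E_4+2E_6)/72$, hence
\[
1728\Big(\Delta-\sum_{n\ge1}n^2\sigma_1(n)q^n\Big)=E_4^3-E_6^2+E_2^3-3E_2E_4+2E_6,
\]
and it suffices to show the right side vanishes modulo $3^5$. Put $S_k=\sum_{n\ge1}\sigma_k(n)q^n$ and write $E_2=1-3a$, $E_4=1+3b$, $E_6=1-9c$ with $a=8S_1$, $b=80S_3$, $c=56S_5$. Then all constant and linear terms cancel, and the right side equals $27(a^2-a^3+b^2+b^3+ab-3c^2)$. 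Modulo $9$ we have $a\equiv-S_1$, $b\equiv-S_3$, $c\equiv 2S_5$, and $S_5\equiv S_1\pmod 3$. Writing $S_3=S_1+3T$, the bracket becomes $\equiv 9S_1T\equiv0\pmod 9$.

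For $m=8$ your reduction $\Delta\equiv q\prod(1-q^{2n})^{12}\pmod 8$ is right, but the decisive step is missing. Put $Q=q^2$ and $\psi(Q)=\sum_{k\ge0}Q^{k(k+1)/2}$. From $(1-x)^{16}\equiv(1-x^2)^8\pmod{16}$ you get $\prod(1-Q^n)^{12}\equiv\prod(1-Q^{2n})^8(1-Q^n)^{-4}=\psi(Q)^4\pmod 8$. Legendre's four-triangular-numbers identity $\psi(Q)^4=\sum_{m\ge0}\sigma_1(2m+1)Q^m$ then yields $\Delta\equiv\sum_{m\ge0}\sigma_1(2m+1)q^{2m+1}\pmod 8$. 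Without these steps, your treatment of $8$ and $9$ reduces to the citation fallback, which is exactly what the paper does.
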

The Fourier expansions of certain normalized Eisenstein series are given as follows:
\begin{lemma}[\cite{murty}, Chap. 4]\label{lemma es}
We have
$$E_4(z) = 1 + 240 \sum_{n=1}^{\infty} \sigma_3(n) q^n, \quad E_6(z) = 1 - 504 \sum_{n=1}^{\infty} \sigma_5(n) q^n$$
$$E_8(z) = 1 + 480 \sum_{n=1}^{\infty} \sigma_7(n) q^n,\quad E_{10}(z) = 1 - 264 \sum_{n=1}^{\infty} \sigma_9(n) q^n$$
$$E_{14}(z) = 1 - 24 \sum_{n=1}^{\infty} \sigma_{13}(n) q^n.$$
\end{lemma}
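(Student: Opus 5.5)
The statement to prove is Lemma~\ref{lemma es}, the Fourier expansions of $E_4,E_6,E_8,E_{10},E_{14}$. The plan is to start from the general normalized expansion already recorded in the introduction,
$$E_k(z)=1-\frac{2k}{B_k}\sum_{n\geq1}\sigma_{k-1}(n)q^n,$$
and reduce the lemma to computing the constant $-2k/B_k$ for $k=4,6,8,10,14$. If this general formula is to be justified rather than assumed, I would derive it in the standard way. First split the double sum defining $G_k$ into the terms with $m=0$, which give $2\zeta(k)$, and the terms with $m\neq0$. For the latter, apply the Lipschitz formula
$$\sum_{n\in\mathbb{Z}}\frac{1}{(z+n)^k}=\frac{(-2\pi i)^k}{(k-1)!}\sum_{r\geq1}r^{k-1}q^r,$$
obtained by differentiating the partial-fraction expansion of $\pi\cot\pi z$ a total of $k-1$ times. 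Replacing $z$ by $mz$ and summing over $m\geq1$ (the terms with $m<0$ double this, since $k$ is even) gives
$$G_k(z)=2\zeta(k)+\frac{2(2\pi i)^k}{(k-1)!}\sum_{n\geq1}\sigma_{k-1}(n)q^n,$$
after grouping the terms by $n=mr$. Then divide by $2\zeta(k)$ and use Euler's formula $\zeta(k)=-\dfrac{(2\pi i)^kB_k}{2\,k!}$ to get the coefficient $-2k/B_k$.

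The remaining step is arithmetic with the Bernoulli numbers
$$B_4=-\tfrac1{30},\quad B_6=\tfrac1{42},\quad B_8=-\tfrac1{30},\quad B_{10}=\tfrac5{66},\quad B_{14}=\tfrac76.$$
These give $-8/B_4=240$, $-12/B_6=-504$, $-16/B_8=480$, $-20/B_{10}=-264$ and $-28/B_{14}=-24$, which are exactly the stated constants. This step also explains the unusual constant for $E_{14}$: since $\dim S_{14}=0$, the series $E_{14}$ is forced to equal $E_4^2E_6$, and its linear coefficient $-24$ is consistent with $480-504=-24$.

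I expect the main obstacle to be the convergence and rearrangement justification for interchanging the sums in the $m\neq0$ part. It is routine for $k\geq4$ because the double series converges absolutely, and if needed I would simply cite \cite{murty}, Chap.~4, for it.
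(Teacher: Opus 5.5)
Your proposal is correct. The paper gives no proof of this lemma and simply quotes it from \cite{murty}, Chap.~4. What you outline is the standard derivation behind that citation: the Lipschitz formula for the $m\neq0$ terms, then Euler's formula for $\zeta(k)$ giving the coefficient $-2k/B_k$, then the Bernoulli-number arithmetic, which checks out in all five cases.
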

The authors together with Mohan \cite{r2} established the following property of Nathanson’s totient function $\Phi(n)$, which will play a pivotal role later:
\begin{lemma}[\cite{r2}]\label{c1}
    If $p$ is a prime and $p \mid n$, then $\Phi(p)\mid \Phi(n)$.
\end{lemma}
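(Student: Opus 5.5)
Lemma \ref{c1} is a divisibility statement: for a prime $p \mid n$ we must show $\Phi(p) \mid \Phi(n)$. I would prove it directly from the Möbius formula $\Phi(n)=\sum_{d\mid n}\mu(d)2^{n/d}$. First, $\Phi(p)=2^p-2$. So the claim is that $2^p-2=2(2^{p-1}-1)$ divides $\Phi(n)$ whenever $p\mid n$. I would split this into two parts, divisibility by $2$ and divisibility by $2^{p-1}-1$. These are coprime, since $2^{p-1}-1$ is odd for $p\ge 2$. For $p=2$ we have $\Phi(2)=2$, so only the factor $2$ is needed.

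\emph{Step 1 (evenness).} Every term $\mu(d)2^{n/d}$ is even because $n/d\ge 1$. Hence $2\mid\Phi(n)$.

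\emph{Step 2 (divisibility by $M=2^{p-1}-1$).} Modulo $M$ we have $2^{p-1}\equiv 1$, so $2^{e}$ depends only on $e \bmod (p-1)$. Write $n=p^a m$ with $p\nmid m$ and $a\ge1$. Since only squarefree $d$ contribute, write $d=d'$ or $d=pd'$ with $d'\mid m$. This pairs the terms:
$$\Phi(n)=\sum_{d'\mid m}\mu(d')\Bigl(2^{p^a m/d'}-2^{p^{a-1}m/d'}\Bigr).$$
Since $p\equiv1\pmod{p-1}$, we get $p^a\equiv p^{a-1}\pmod{p-1}$. Therefore the exponents $p^am/d'$ and $p^{a-1}m/d'$ are congruent modulo $p-1$.

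\emph{Step 3 (conclude).} To turn the congruence of exponents into equality of the powers modulo $M$, I would use the following: if $e\equiv e'\pmod{p-1}$, then $2^e-2^{e'}=2^{\min}\bigl(2^{|e-e'|}-1\bigr)$. Here $(p-1)\mid |e-e'|$, so $2^{p-1}-1$ divides $2^{|e-e'|}-1$. Hence each bracket in the displayed sum is divisible by $M$, and so $M\mid\Phi(n)$. Combining this with Step 1 and $\gcd(2,M)=1$ gives $\Phi(p)=2M\mid\Phi(n)$.

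The main obstacle is the pairing in Step 2. It must correctly handle the squarefree restriction, which forces us to use exactly $p$ to the first power in $d$. It must also keep the sign bookkeeping right, via $\mu(pd')=-\mu(d')$. Everything else is elementary.
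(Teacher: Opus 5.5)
Your proof is correct and complete. The pairing of $d'$ with $pd'$, using $\mu(pd')=-\mu(d')$ for $p\nmid d'$, is right. So is the observation that the two exponents differ by $p^{a-1}(p-1)m/d'$, which makes $2^{p-1}-1$ divide every bracket. The paper gives no proof of this lemma: it is quoted from \cite{r2}, so there is no in-paper argument to compare with. Your argument makes the statement self-contained, using only the M\"obius formula for $\Phi$ recalled in the introduction.

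You can streamline it slightly, since Step 1 and the coprimality of $2$ and $M$ are not needed. Each bracket equals $2^{e'}\bigl(2^{e-e'}-1\bigr)$ with $e'=p^{a-1}m/d'\ge 1$ and $(p-1)\mid e-e'$. It is therefore already divisible by $2\cdot(2^{p-1}-1)=\Phi(p)$.

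Your method also gives more than the lemma. Since $e'\ge p^{a-1}$ and $p^{a-1}(p-1)\mid e-e'$, the same factorization shows every bracket is divisible by $2^{p^{a-1}}\bigl(2^{p^{a-1}(p-1)}-1\bigr)=\Phi(p^a)$. Hence $\Phi(p^a)\mid\Phi(n)$ whenever $p^a$ is the exact power of $p$ dividing $n$.
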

Ayad and Kihel \cite{r1} gave the following relation between $g(n)$ and $\Phi(n)$.
\begin{lemma}[\cite{r1}, Lemma 1]\label{lemma1.5}
For any integer $n\geq1$, we have 
$$g(n+1)-g(n)=\frac{1}{2}\Phi(n+1).$$
\end{lemma}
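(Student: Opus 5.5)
The plan is to prove the identity directly from Nathanson's Möbius inversion formulas for $g$ and $\Phi$ recorded above, by comparing the two sums for $g(n+1)$ and $g(n)$ term by term.

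\emph{Step 1: put both sums on a common index range.} Write
$$g(n+1)-g(n)=\sum_{d=1}^{n+1}\mu(d)\bigl(2^{\lfloor (n+1)/d\rfloor}-1\bigr)-\sum_{d=1}^{n}\mu(d)\bigl(2^{\lfloor n/d\rfloor}-1\bigr).$$
Since $\lfloor n/(n+1)\rfloor=0$, the missing term $d=n+1$ of the second sum would equal $\mu(n+1)(2^0-1)=0$. So we may extend the second sum to $d\le n+1$ without changing it. The difference then becomes
$$\sum_{d=1}^{n+1}\mu(d)\bigl(2^{\lfloor (n+1)/d\rfloor}-2^{\lfloor n/d\rfloor}\bigr).$$

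\emph{Step 2: evaluate each summand.} For $1\le d\le n+1$ we have $\lfloor (n+1)/d\rfloor-\lfloor n/d\rfloor\in\{0,1\}$, and it equals $1$ exactly when $d\mid n+1$. Hence every term with $d\nmid n+1$ vanishes. For $d\mid n+1$, put $e=(n+1)/d$; then $\lfloor n/d\rfloor=e-1$, so the summand is
$$\mu(d)\bigl(2^{e}-2^{e-1}\bigr)=\mu(d)\,2^{e-1}.$$

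\emph{Step 3: recognise $\Phi(n+1)$.} Combining the surviving terms gives
$$g(n+1)-g(n)=\sum_{d\mid n+1}\mu(d)\,2^{(n+1)/d-1}=\tfrac12\sum_{d\mid n+1}\mu(d)\,2^{(n+1)/d}=\tfrac12\Phi(n+1),$$
where the last equality is Nathanson's formula for $\Phi$.

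The only point needing care is Step 2: justifying that the floor difference is $1$ precisely on divisors of $n+1$. This holds because $\lfloor (n+1)/d\rfloor>\lfloor n/d\rfloor$ iff some multiple of $d$ lies in $(n,n+1]$, i.e. iff $d\mid n+1$. One must also handle the boundary term $d=n+1$, which Step 1 does.

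As a sanity check, there is a combinatorial reading. $g(n+1)-g(n)$ counts relatively prime subsets of $\{1,\dots,n+1\}$ that contain $n+1$. Such a set has $\gcd$ $1$ exactly when $\gcd(A\setminus\{n+1\})$ is coprime to $n+1$. Meanwhile $\Phi(n+1)$ counts all nonempty sets $A\subseteq\{1,\dots,n+1\}$ with $\gcd(A)$ coprime to $n+1$. Pairing each set $A\not\ni n+1$ with $A\cup\{n+1\}$ preserves this coprimality condition and splits $\Phi(n+1)$ into two equal halves. This explains the factor $\tfrac12$, but the algebraic computation above suffices as the proof.
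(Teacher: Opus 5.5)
Your proof is correct and complete. Extending the sum for $g(n)$ to $d=n+1$ costs nothing, the $-1$ terms cancel, $\lfloor (n+1)/d\rfloor-\lfloor n/d\rfloor$ is exactly the indicator of $d\mid n+1$, and for such $d$ one has $\lfloor n/d\rfloor=(n+1)/d-1$. These steps give precisely $\frac12\sum_{d\mid n+1}\mu(d)2^{(n+1)/d}=\frac12\Phi(n+1)$.

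The paper gives no proof of this lemma; it imports the identity from Ayad and Kihel. Your argument is a different, self-contained derivation that uses only Nathanson's two M\"obius-inversion formulas, which the paper records in its introduction. Its cost is that it depends on those formulas.

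Your combinatorial \emph{sanity check} does not need them, and it is already a full proof once one edge case is handled. The pair $\emptyset\leftrightarrow\{n+1\}$ is excluded from both counts: $\emptyset$ is excluded by definition, and $\gcd(\{n+1\})=n+1$ is not coprime to $n+1$ because $n\ge 1$. For nonempty $B\subseteq\{1,\dots,n\}$ one has $\gcd(B\cup\{n+1\})=\gcd(\gcd(B),n+1)$, which equals $1$ exactly when $\gcd(B)$ is coprime to $n+1$. This shows that $g(n+1)-g(n)$ and $\frac12\Phi(n+1)$ count the same sets.

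The algebraic route is quicker to verify line by line. The bijective route explains the factor $\frac12$ and is more elementary.
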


Finally, we recall the following theorem due to Adamczewski, Bugeaud, and Davison \cite{r4}, to construct transcendental numbers from certain irrational numbers, we use this theorem to prove Theorem \ref{thm1.10}.
\begin{theorem}[\cite{r4}, Theorem 4.1]\label{thm*}
Let $\theta$ be an irrational number with $0<\theta<1$ and let $k$ be an integer at least equal to $2$. Let $\textbf{d}=(d_n)_{n\geq1}$ be defined by $d_n=1+([n\theta] \pmod k)$ for any $n\geq1$. Then, the number $\alpha_{k,\theta}=[0;d_1,d_2,\ldots]$ is transcendental.
\end{theorem}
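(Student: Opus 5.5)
The statement immediately above is Theorem \ref{thm*}, quoted from Adamczewski, Bugeaud and Davison \cite{r4}. Since the paper uses it as a black box, the plan below reconstructs its proof along the lines of \cite{r4}.

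The plan is to combine the combinatorics of Sturmian words with a transcendence criterion for continued fractions whose partial quotients begin with long repetitive blocks.

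The first step is combinatorial. Since $\theta$ is irrational, the sequence $([n\theta])_{n\ge1}$ has increments $[ (n+1)\theta]-[n\theta]\in\{0,1\}$, and these increments form a Sturmian word $s_\theta$ of slope $\theta$. Consequently $\mathbf d$ is the image of a Sturmian rotation word under the coding $n\mapsto 1+([n\theta] \bmod k)$. Equivalently, $\mathbf d$ codes the orbit of a rotation on the circle $\mathbb R/k\mathbb Z$ by angle $\theta$, partitioned into $k$ unit intervals.

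Using the convergents $p_j/q_j$ of $\theta$, I would then show a repetition property: for every $j$ the word $\mathbf d$ begins with, or has shortly after a short prefix, a block of the form $U_jV_jV_jV_j'$, where $|V_j|=kq_j$ or similar and $V_j'$ is a prefix of $V_j$. The justification is that $[(n+kq_j)\theta]\equiv[n\theta]+kp_j \pmod k$ for all $n$ except those where $n\theta$ lies within $\|q_j\theta\|$-distance of an integer. Hence $d_{n+kq_j}=d_n$ except on a sparse set, and for initial segments of length comparable to $q_{j+1}$ there is no exception at all. This yields arbitrarily long prefixes of the form $U V^{w}$, with the exponent $w$ bounded below by some $w>1$ uniformly, and with $|U|/|V|$ bounded.

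The second step applies the continued fraction transcendence criterion of Adamczewski and Bugeaud. It states that if the partial quotients are bounded and the sequence satisfies this "stammering" condition, that is, infinitely many prefixes $U_jV_j^{w}$ with $|V_j|\to\infty$, $|U_j|/|V_j|$ bounded, and $w>1$, then $\alpha$ is either quadratic or transcendental. The criterion is proved via the Schmidt subspace theorem, applied to the quadratic numbers whose continued fractions are the eventually periodic words $U_j\overline{V_j}$.

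It remains to exclude the quadratic case. If $\alpha_{k,\theta}$ were quadratic, its expansion would be eventually periodic by Lagrange's theorem, so $\mathbf d$ would be eventually periodic. But then $[n\theta]\bmod k$, and hence the increments $[(n+1)\theta]-[n\theta]$, would be eventually periodic, which would force the frequency $\theta$ of $1$'s to be rational. This is a contradiction.

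The main obstacle is the first step: obtaining the repetition exponent uniformly larger than $1$ and controlling the prefix $U_j$ for every irrational $\theta$, including those with bounded partial quotients. I would first try working with the Sturmian word and the classical fact that every Sturmian word begins with arbitrarily long prefixes of the form $UV^{w}$ with $w$ close to $2$ up to bounded $U$. The idea is to lift this to the mod-$k$ coding by taking $V$ to be the $k$-fold concatenation of a Sturmian return word, whose total contribution to $[n\theta]$ is then divisible by $k$.
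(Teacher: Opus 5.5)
The paper gives no proof of this statement; it is quoted from Adamczewski--Bugeaud--Davison. Your architecture matches that source: prefix repetitions in $\mathbf d$, the Adamczewski--Bugeaud subspace-theorem criterion, then ruling out the quadratic case. Your last step (non-periodicity of $[n\theta]$ forces $\theta$ rational) is correct. The gap is the combinatorial step, which is the real content of the theorem, and as you wrote it, it is partly false.

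\textbf{Why the step fails as written.} For the shift $kq_j$, $d_{n+kq_j}\neq d_n$ exactly when $\{n\theta\}$ lies in the one-sided interval of length $k\|q_j\theta\|$ (not $\|q_j\theta\|$) at $0$, on the side opposite to $q_j\theta$.
\begin{itemize}
\item This set is not sparse on the scale of the period. Since $\|q_{j-1}\theta\|=a_{j+1}\|q_j\theta\|+\|q_{j+1}\theta\|$, it contains $q_{j-1}$ whenever $a_{j+1}\le k-1$.
\item For $\theta=(\sqrt5-1)/2$ and $k=10$, the first exception is $n=q_{j-3}\approx0.236\,q_j$. So the longest prefix with period $10q_j$ is only about a $1.024$-power. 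Exceptions also occur in every window of $q_{j-2}$ consecutive integers, so nothing like $U_jV_jV_jV_j'$ with $|V_j|=kq_j$ exists.
\item If $a_{j+1}=1$ while $a_j$ is huge, the first exception is $q_{j-1}\ll q_j$. So no exponent bound uniform over all $j$ holds either.
\end{itemize}
Your proposed remedy does not repair this. Taking $V=R^k$ with $R$ a Sturmian return word of length $q_j$ requires the Sturmian word to begin with $R^{kw}$, a power of exponent larger than $k$. For $\theta=(\sqrt5-1)/2$ the increment word is the Fibonacci word, whose critical exponent is $2+(1+\sqrt5)/2<4$. Near-squares of period $q_j$ lift to the mod-$k$ coding only when $k\mid p_j$.

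\textbf{What is missing.} You need a lower bound for the first exception $n_0$, which satisfies $\|n_0\theta\|<k\|q_j\theta\|<k/q_{j+1}$, together with a suitable choice of indices $j$. Split into two cases.
\begin{itemize}
\item If $a_{j+1}\ge k$ for infinitely many $j$, use those $j$. For $1\le n<q_j$ we have $\|n\theta\|\ge\|q_{j-1}\theta\|>a_{j+1}\|q_j\theta\|\ge k\|q_j\theta\|$, and $n=q_j$ lies on the harmless side. So $d_1\cdots d_{(k+1)q_j}$ has period $kq_j$, i.e.\ it is a $(1+1/k)$-power.
\item Otherwise $a_i\le k-1$ for all large $i$, so $\|n\theta\|>1/((k+1)n)$ for all large $n$. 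This forces $n_0>q_{j+1}/(k(k+1))$, giving prefix powers of exponent at least $1+1/(2k^2(k+1))$ for all large $j$.
\end{itemize}
In both cases $U_j$ is empty, $w>1$ is fixed, and $|V_j|=kq_j\to\infty$. Since $1\le d_n\le k$, the convergent denominators $Q_\ell$ of $\alpha_{k,\theta}$ satisfy $Q_\ell\le(k+1)^\ell$. So the Adamczewski--Bugeaud criterion for continued fractions beginning with arbitrarily long $w$-powers applies, and with your non-periodicity argument this gives transcendence.

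Having $U_j$ empty matters. The criterion in the form you quote, with nonempty $U_j$ and only $|U_j|/|V_j|$ bounded, is Bugeaud's later (2013) strengthening. The 2005 Adamczewski--Bugeaud result is clean only for prefix powers. With nonempty $U_j$ it needs an extra quantitative condition linking $|U_j|/|V_j|$, $w$ and the growth of $Q_\ell$.
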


\section{Proof of Main Results}
\begin{proof}[\textbf{Proof of Theorem \ref{thm1.8}}]
Assume, for the sake of contradiction, that the sequence $\{f(n) \pmod{m}\}_{n\geq 1}$ is periodic. Then there exist fixed positive integers $N,L$ such that
\begin{align}\label{A}
f(n+L)\equiv f(n) \pmod{m}\quad \text{for all~}n\geq N.    
\end{align}

In particular, choose $n=p_N\geq N$, the smallest prime such that $f(p_N)\not\equiv 0\pmod{m}$  (existence is guaranteed by the fact that $m\nmid f(p)$ for infinitely many primes $p$), then from (\ref{A}) we get
\begin{align}\label{1}
f(p_N+jL)\equiv f(p_N)\not\equiv 0\pmod{m}\quad \text{for all~}j\in\mathbb{N}.    
\end{align}
Setting $j=p_NK(m)j_N$ in (\ref{1}), where $j_N$ is some positive integer, we obtain
\begin{align}\label{2}
f(p_N(1+K(m)j_NL))\equiv f(p_N)\not\equiv 0\pmod{m}.    
\end{align}
Since $\gcd(1,p_NK(m)L)=1$, by the Dirichlet prime number theorem, the following sequence
$$1, \quad 1+p_NK(m)L, \quad 1+2p_NK(m)L,\quad \ldots,\quad 1+j_N'p_NK(m)L,\quad \ldots$$
contains infinitely many primes. Without loss of generality, assume that $1+j_N'p_NK(m)L$ is the smallest prime in this sequence. Then choosing $j_N=j_N'p_N$ in (\ref{2}) we get
\begin{align}\label{eq3}
f(p_N(1+K(m)j_N'p_NL))\equiv f(p_N)\not\equiv 0\pmod{m}.    
\end{align}
Since $1+K(m)j_N'p_NL$ is a prime and satisfies $1+K(m)j_N'p_NL\equiv 1 \pmod{K(m)}$, we obtain  $m\mid f(1+K(m)j_N'p_NL)$. Moreover, using the fact that $f(p)\mid f(n)$ whenever $p\mid n$ and observing that $1+K(m)j_N'p_NL\mid p_N(1+K(m)j_N'p_NL$ we deduce  $f(1+K(m)j_N'p_NL)\mid f(p_N(1+K(m)j_N'p_NL))$. Consequently, it follows that
\begin{align}\label{1*}
 0\equiv f(p_N(1+K(m)j_N'p_NL)\equiv f(p_N)\not\equiv 0\pmod{m},   
\end{align}

which is a contradiction. This completes the proof.
\end{proof}
\begin{proof}[\textbf{Proof of Theorem \ref{thm1.9}}]
 If possible assume that $\{f(n) \pmod{m}\}_{n\geq 1}$ is a periodic sequence. Then there exist fixed positive integers $N,L$ such that
 \begin{align}\label{B}
  f(n+L)\equiv f(n) \pmod{m}\quad \text{for all~}n\geq N.   
 \end{align}
In particular, setting $n=p_N\geq N$ in (\ref{B}), the smallest prime such that $f(p_N)\equiv r_2\pmod{m}$ (existence is guaranteed by the fact that $f(p)\equiv r_2\pmod{m}$ for infinitely many primes $p$), we obtain
\begin{align}\label{eq4}
f(p_N+jL)\equiv f(p_N)\equiv r_2\pmod{m}\quad \text{for all~}j\in\mathbb{N}.    
\end{align}
Choosing $j=p_Nmj_N$ in (\ref{eq4}), where $j_N$ is some positive integer, we get
\begin{align}\label{eq5}
f(p_N(1+mj_NL))\equiv f(p_N)\equiv r_2\pmod{m}.    
\end{align}
Since $\gcd(1,p_NmL)=1$, by the Dirichlet prime number theorem, the following sequence
$$1, \quad 1+p_NmL, \quad 1+2p_NmL,\quad \ldots,\quad 1+j_N'p_NmL,\quad \ldots$$
contains infinitely many primes. Without loss of generality, assume that $1+j_N'p_NmL$ is the smallest prime in this sequence. Then setting $j_N=j_N'p_N$ in (\ref{eq5}) we get
\begin{align}\label{eq6}
f(p_N(1+mj_N'p_NL))\equiv f(p_N)\equiv r_2\pmod{m}.    
\end{align}
Since $f$ is multiplicative function for square-free integers $n$, we have $f(p_N(1+mj_N'p_NL))=f(p_N)f(1+mj_N'p_NL)$ and since $1+mj_N'p_NL$ is a prime such that $1+mj_N'p_NL\equiv 1 \pmod{m}$, we get $f(1+mj_N'p_NL)\equiv r_1\pmod{m}$, consequently it follows from (\ref{eq6}) that
$$r_2r_1\equiv f(p_N)f(1+mj_N'p_NL) =f(p_N(1+mj_N'p_NL)\equiv f(p_N)\equiv r_2\pmod{m}$$
which contradicts the fact that $r_2,r_2r_1$ are distinct in $\mathbb{Z}/m\mathbb{Z}$ . This completes the proof.   
\end{proof}
Our proof of Theorem \ref{theorem tau} is based on an application of Theorem \ref{murty}.
\begin{proof}[\textbf{Proof of Theorem \ref{theorem tau}}]
Let $p$ be a prime. We treat each value of $m$ separately.
\noindent
\begin{enumerate}
    \item Let $m=5$. If $p\equiv 1 \pmod{5}$ then $\tau(p)\equiv p(p+1)\equiv 2 \pmod{5}$ and if $p\equiv 2 \pmod{5}$ then $\tau(p)\equiv p(p+1)\equiv 1 \pmod{5}$, therefore by the Dirichlet prime number theorem, we have $5\nmid \tau(p)$ for infinitely many primes $p$. The conclusion follows from Theorem \ref{thm1.9} with $r_1=2,r_2=1$.
     \item Let $m=7$. If $p\equiv 1 \pmod{7}$ then $\tau(p)\equiv p(p^3+1)\equiv 2 \pmod{7}$ and if $p\equiv 2 \pmod{7}$ then $\tau(p)\equiv p(p^3+1)\equiv 4 \pmod{7}$, therefore by the Dirichlet prime number theorem, we have $7\nmid \tau(p)$ for infinitely many primes $p$. Hence, the result follows from Theorem \ref{thm1.9} with $r_1=2,r_2=4$.
      \item Let $m=8$. If $p\equiv 1 \pmod{8}$ then $\tau(p)\equiv p+1\equiv 2 \pmod{8}$ and if $p\equiv 3 \pmod{8}$ then $\tau(p)\equiv p+1\equiv 4 \pmod{8}$, therefore by the Dirichlet prime number theorem, we have $8\nmid \tau(p)$ for infinitely many primes $p$, therefore, the prove now follows from Theorem \ref{thm1.9} with $r_1=2,r_2=4$.
      \item Let $m=9$. If $p\equiv 1 \pmod{9}$ then $\tau(p)\equiv p^2(p+1)\equiv 2 \pmod{9}$ and if $p\equiv 2 \pmod{9}$ then $\tau(p)\equiv p^2(p+1)\equiv 3 \pmod{9}$, therefore by the Dirichlet prime number theorem, we have $9\nmid \tau(p)$ for infinitely many primes $p$, therefore, the prove now follows from Theorem \ref{thm1.9} with $r_1=2,r_2=3$.
    \item Let $m=691$. If $p\equiv 1 \pmod{691}$ then $\tau(p)\equiv p^{11}+1\equiv 2 \pmod{691}$ and if $p\equiv 2 \pmod{691}$ then $\tau(p)\equiv p^{11}+1\equiv 667 \pmod{691}$, therefore by the Dirichlet prime number theorem, we have $691\nmid \tau(p)$ for infinitely many primes $p$. Applying Theorem \ref{thm1.9} with $r_1=2,r_2=667$ completes the proof.
\end{enumerate}
\end{proof}
\begin{remark}
However, it is worth noting that the classical Ramanujan congruences \cite{new1,new2}, $\tau(n)\equiv n^2\sigma_1(n) \pmod{3}$ and $\tau(n)\equiv n^3\sigma_1(n) \pmod{4}$ do not suffice, together with Theorem \ref{thm1.9} to obtain such non-periodicity results for $\{\tau(n) \pmod{3}\}_{n\geq1}$ and $\{\tau(n) \pmod{4}\}_{n\geq1}$.
\end{remark}
The proof of Theorem \ref{theorem es} requires the following lemma.
\begin{lemma}\label{lemmaess}
  If the sequence $\{f(n) \pmod{m}\}_{n\geq1}$ is non-periodic, then the sequence $\{\lambda f(n) \pmod{m}\}_{n\geq1}$ is also non-periodic, where $\gcd(\lambda,m)=1$.  
\end{lemma}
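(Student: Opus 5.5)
The plan is to argue by contraposition: assume $\{\lambda f(n) \pmod{m}\}_{n\geq1}$ is eventually periodic and deduce that $\{f(n)\pmod{m}\}_{n\geq1}$ is eventually periodic too. That contradicts the hypothesis.

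So suppose there are fixed positive integers $N,L$ with
$$\lambda f(n+L)\equiv \lambda f(n)\pmod{m}\quad\text{for all } n\geq N.$$
Since $\gcd(\lambda,m)=1$, Bézout's identity gives an integer $\lambda'$ with $\lambda\lambda'\equiv 1\pmod{m}$, so $\lambda$ is invertible in $\mathbb{Z}/m\mathbb{Z}$. Multiplying the congruence by $\lambda'$ gives $f(n+L)\equiv f(n)\pmod{m}$ for all $n\geq N$, with the same $N$ and $L$. Hence $\{f(n)\pmod{m}\}_{n\geq1}$ is eventually periodic, which is the desired contradiction.

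Equivalently, reduction modulo $m$ followed by multiplication by $\lambda$ is a bijection of $\mathbb{Z}/m\mathbb{Z}$. It therefore preserves both equalities and inequalities between terms, so it maps eventually periodic sequences to eventually periodic sequences in both directions.

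No real obstacle is expected. The only point needing care is the role of $\gcd(\lambda,m)=1$: without it, cancelling $\lambda$ fails. For example, if $\lambda=m$, then $\lambda f(n)\equiv 0\pmod{m}$ is constant, hence periodic, even when $f(n)\pmod m$ is not.

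For the application to Theorem~\ref{theorem es}, I would then combine this lemma with Lemma~\ref{lemma es}. For instance, $a(n)=240\,\sigma_3(n)$, and $\lambda=240$ is coprime to $m$ under the stated hypotheses. So it suffices to prove that $\sigma_{k-1}(n)\pmod m$ is non-periodic, which I would do via Theorem~\ref{thm1.9} using $\sigma_{k-1}(p)=1+p^{k-1}$.
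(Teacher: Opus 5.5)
Your proof is correct and is essentially the paper's argument: assume $\{\lambda f(n) \pmod{m}\}_{n\geq1}$ is eventually periodic with some $N,L$, cancel $\lambda$ using $\gcd(\lambda,m)=1$ to get $f(n+L)\equiv f(n)\pmod{m}$ for all $n\geq N$, and contradict the hypothesis. Your planned application to Theorem~\ref{theorem es}, via $\sigma_{k-1}(p)=1+p^{k-1}$ and Theorem~\ref{thm1.9}, is also how the paper uses the lemma.
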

\begin{proof}
We argue by contradiction. Suppose that the sequence $\{\lambda f(n) \pmod{m}\}_{n\geq 1}$ is periodic. Then there exist fixed positive integers $N,L$ such that
\begin{align}
\lambda f(n+L)\equiv \lambda f(n) \pmod{m}\quad \text{for all~}n\geq N.    
\end{align}
Since $\gcd(\lambda ,m)=1$, we get
$$f(n+L)\equiv f(n) \pmod{m}\quad \text{for all~}n\geq N,$$
which implies that the sequence $\{f(n) \pmod{m}\}_{n\geq1}$ is eventually periodic, which is a contradiction. This completes the proof.
\end{proof}

\begin{proof}[\textbf{Proof of Theorem \ref{theorem es}}]
Let $p$ be a prime. 
\noindent
\begin{enumerate}
    \item From Lemma \ref{lemma es} we get $a(n)=240\sigma_3(n)$ for $n\geq1$. We first prove that $\{\sigma_3(n) \pmod{m}\}_{n\geq1}$ is non-periodic then the proof follows from Lemma \ref{lemmaess}. If $p\equiv 1 \pmod{m}$ then $\sigma_3(p)=p^3+1\equiv 2 \pmod{m}$ and if $p\equiv 2 \pmod{m}$ then $\sigma_3(p)=p^3+1\equiv 9 \pmod{m}$, therefore by the Dirichlet prime number theorem, we have $m\nmid \sigma_3(p)$ for infinitely many primes $p$. The conclusion follows from Theorem \ref{thm1.9} with $r_1=2,r_2=9 \pmod{m}$.
    \item From Lemma \ref{lemma es} we get $b(n)=-504\sigma_5(n)$ for $n\geq1$. We first prove that $\{\sigma_5(n) \pmod{m}\}_{n\geq1}$ is non-periodic then the proof follows from Lemma \ref{lemmaess}. If $p\equiv 1 \pmod{m}$ then $\sigma_5(p)=p^5+1\equiv 2 \pmod{m}$ and if $p\equiv 2 \pmod{m}$ then $\sigma_5(p)=p^5+1\equiv 33 \pmod{m}$, therefore by the Dirichlet prime number theorem, we have $m\nmid \sigma_5(p)$ for infinitely many primes $p$. Hence, the result follows from Theorem \ref{thm1.9} with $r_1=2 \pmod{m},r_2=33 \pmod{m}$.
     \item From Lemma \ref{lemma es} we get $c(n)=480\sigma_7(n)$ for $n\geq1$. We first prove that $\{\sigma_7(n) \pmod{m}\}_{n\geq1}$ is non-periodic then the proof follows from Lemma \ref{lemmaess}. If $p\equiv 1 \pmod{m}$ then $\sigma_7(p)=p^7+1\equiv 2 \pmod{m}$ and if $p\equiv 2 \pmod{m}$ then $\sigma_7(p)=p^7+1\equiv 129 \pmod{m}$, therefore by the Dirichlet prime number theorem, we have $m\nmid \sigma_7(p)$ for infinitely many primes $p$. Applying Theorem \ref{thm1.9} with $r_1=2,r_2=129\pmod{m}$ completes the proof.
      \item From Lemma \ref{lemma es} we get $d(n)=-264\sigma_9(n)$ for $n\geq1$. We first prove that $\{\sigma_9(n) \pmod{m}\}_{n\geq1}$ is non-periodic then the proof follows from Lemma \ref{lemmaess}. If $p\equiv 1 \pmod{m}$ then $\sigma_9(p)=p^9+1\equiv 2 \pmod{m}$ and if $p\equiv 2 \pmod{m}$ then $\sigma_9(p)=p^9+1\equiv 513\pmod{m}$, therefore by the Dirichlet prime number theorem, we have $m\nmid \sigma_9(p)$ for infinitely many primes $p$. The conclusion follows from Theorem \ref{thm1.9} with $r_1=2,r_2=513\pmod{m}$.
       \item From Lemma \ref{lemma es} we get $e(n)=-24\sigma_{13}(n)$ for $n\geq1$. We first prove that $\{\sigma_{13}(n) \pmod{m}\}_{n\geq1}$ is non-periodic then the proof follows from Lemma \ref{lemmaess}. If $p\equiv 1 \pmod{m}$ then $\sigma_{13}(p)=p^{13}+1\equiv 2 \pmod{m}$ and if $p\equiv 2 \pmod{m}$ then $\sigma_{13}(p)=p^{13}+1\equiv 8193\pmod{m}$, therefore by the Dirichlet prime number theorem, we have $m\nmid \sigma_{13}(p)$ for infinitely many primes $p$. Hence, the result follows from Theorem \ref{thm1.9} with $r_1=2,r_2=8193\pmod{m}$.
    \end{enumerate}
\end{proof}

\begin{proof}[\textbf{Proof of Theorem \ref{thm1.4}}]
Let $p$ be a prime. If $p\equiv 1\pmod{\varphi(m)}$, then by Euler's totient theorem, we have
$$\Phi(p)=2^{p}-2\pmod{m}=0.$$
If $p\equiv\varphi(m)-1\pmod{\varphi(m)}$ then
$$\Phi(p)=2^{p}-2\pmod{m}=\frac{m-3}{2}\neq 0\quad \text{as~}m\geq5.$$
Since $\gcd(\varphi(m)-1,\varphi(m))=1$, thanks to the Dirichlet prime number theorem for ensuring that $m\nmid \Phi(p)$ for infinitely many primes $p$. Now the proof follows from Lemma \ref{c1} and Theorem \ref{thm1.8} with $K(m)=\varphi(m)$.
\end{proof}

\begin{proof}[\textbf{Proof of Theorem \ref{thm1.3}}]
Suppose, for the sake of contradiction, that the sequence $\{g(n) \pmod{m}\}_{n\geq1}$ is a periodic sequence for all odd positive integers $m\geq 5$. Then Lemma \ref{lemma1.5} implies that $\{\Phi(n) \pmod{m}\}_{n\geq 1}$ is an eventually periodic sequence for all odd positive integers $m\geq 5$, which immediately contradicts Theorem \ref{thm1.4}. This completes the proof.
\end{proof}

\begin{proof}[\textbf{Proof of Theorem \ref{thm1.5}}]\noindent
\begin{enumerate}
    \item If $p$ is a prime such that $p\equiv 1\pmod{m}$ then $m\mid \varphi(p)$, and if $p\equiv m-1\pmod{m}$ then $\varphi(p)\equiv m-2\pmod{m}$, since $m\geq3$ and $\gcd(m-1,m)=1$, by the Dirichlet prime number theorem, it follows that $m\nmid\varphi(p)$ for infinitely many primes $p$. Hence, the result follows from Theorem \ref{thm1.8} with $K(m)=m$.

Also, we can set $r_1=0,r_2=m-2$ in Theorem \ref{thm1.9} to conclude the proof.

\item For any prime $p\equiv 1\pmod{m}$ we have $\sigma(p)\equiv2\pmod{m}$, and for any prime $p\equiv r \pmod{m}$ where $2\leq r\leq m-2$ such that $\gcd(r,m)=1$ (existence of $r$ follows from the fact that $\varphi(m)>2$), we have $\sigma(p)\equiv r+1\pmod{m}$. Since $\gcd(r,m)=1$, by the Dirichlet prime number theorem, we have infinitely many primes $p$ satisfying $\sigma(p)\equiv r+1\pmod{m}$. Applying Theorem \ref{thm1.9} with $r_1=2,r_2=r+1$ completes the proof.
\item If $p$ is a prime such that $p\equiv 1\pmod{m}$ then $\sigma*\varphi(p) \equiv2\pmod{m}$, and if $p\equiv r \pmod{m}$ where $2\leq r\leq m-2$ such that $(r,m)=1$ (existence of $r$ follows from the fact that $\varphi(m)>2$), we have $\sigma*\varphi(p)\equiv 2r\pmod{m}$. Since $\gcd(r,m)=1$, by the Dirichlet prime number theorem, we have infinitely many primes $p$ satisfying $\sigma*\varphi(p)\equiv 2r\pmod{m}$. The conclusion follows from Theorem \ref{thm1.9} with $r_1=2,r_2=2r$.

\item If $p$ is a prime such that $p\equiv 1\pmod{m}$ then $m\mid J_k(p)$, and if $p\equiv m-1\pmod{m}$ then $J_k(p)\equiv (m-1)^k-1\equiv m-2 \pmod{m}$, since $m\geq3$ and $\gcd(m-1,m)=1$, by the Dirichlet prime number theorem, it follows that $m\nmid J_k(p)$ for infinitely many primes $p$. Hence, the result follows from Theorem \ref{thm1.8} with $K(m)=m$.
Also, we can set $r_1=0,r_2=m-2$ in Theorem \ref{thm1.9} to conclude the proof.
\item For any prime $p\equiv 1\pmod{m}$ we have $m\mid \varphi^{*}(p)$, and if $p\equiv m-1\pmod{m}$ then $\varphi^{*}(p)\equiv m-2 \pmod{m}$, since $m\geq3$ and $\gcd(m-1,m)=1$, by the Dirichlet prime number theorem, it follows that $m\nmid \varphi^{*}(p)$ for infinitely many primes $p$. Hence, the result follows from Theorem \ref{thm1.8} with $K(m)=m$.
Also, we can set $r_1=0,r_2=m-2$ in Theorem \ref{thm1.9} to conclude the proof.
\end{enumerate}
\end{proof}

To prove Theorem \ref{thm1.10}, we need the following lemmas.
\begin{lemma}\label{lemma3.1}
$\{\big(\lambda \Phi(n) \pmod{m}\big)\pmod{10}\}_{n\geq 1}$ is a non-periodic sequence for all odd positive integers $m\geq 5$ with $m\not\equiv 3 \pmod{10}$, and $\lambda =\frac{1}{2},1$.
\end{lemma}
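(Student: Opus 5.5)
The plan is to rerun the argument of Theorem \ref{thm1.8} with an extra bookkeeping step for the reduction modulo $10$. For $n\geq 1$ write $R_\lambda(n)\in\{0,1,\ldots,m-1\}$ for the least nonnegative residue of $\lambda\Phi(n)$ modulo $m$. For $\lambda=\frac12$ this makes sense because $\Phi(n)$ is even for $n\geq2$, and $m$ is odd, so $\frac12$ is invertible modulo $m$. Suppose, for contradiction, that $R_\lambda(n+L)\equiv R_\lambda(n)\pmod{10}$ for all $n\geq N$.

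\textbf{Step 1: a prime with a ``good'' residue.} I will find infinitely many primes $p$ for which $R_\lambda(p)$ is nonzero and not divisible by $10$. I will take $p$ in a fixed class $a\pmod{\varphi(m)}$ with $\gcd(a,\varphi(m))=1$, which contains infinitely many primes by Dirichlet's theorem. By Euler's theorem, $\Phi(p)=2^p-2\equiv 2^a-2\pmod m$, so $R_\lambda(p)$ is constant on that class.
\begin{enumerate}
\item[(i)] For $\lambda=1$, take $a=\varphi(m)-1$, exactly as in the proof of Theorem \ref{thm1.4}. This gives $R_1(p)=\frac{m-3}{2}$, which is nonzero for $m\geq5$ and divisible by $10$ only if $m\equiv 3\pmod{20}$. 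That case is excluded by the hypothesis $m\not\equiv3\pmod{10}$.
\item[(ii)] For $\lambda=\frac12$, the same $a$ gives $\frac12\Phi(p)\equiv 2^{-2}-1\equiv -3\cdot4^{-1}\pmod m$. This residue is $\frac{m-3}{4}$ when $m\equiv3\pmod4$ and $\frac{3m-3}{4}$ when $m\equiv1\pmod4$. The first is divisible by $10$ only if $m\equiv3\pmod{40}$, which is excluded. The second, however, is divisible by $10$ when $m\equiv1\pmod{40}$ (for example $m=41$).
\end{enumerate}

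\textbf{Step 2: the contradiction.} Fix a prime $p_N\geq N$ from Step 1, so that $R_\lambda(p_N)\not\equiv0\pmod{10}$. Periodicity gives $R_\lambda(p_N+jL)\equiv R_\lambda(p_N)\pmod{10}$ for all $j$. As in the proof of Theorem \ref{thm1.8}, Dirichlet's theorem supplies a prime $q=1+j'p_N\varphi(m)L$, and then $p_Nq=p_N+(j'p_N^2\varphi(m))L$ lies on this progression. Because $q\equiv1\pmod{\varphi(m)}$ and $m$ is odd, we get $m\mid 2^{q-1}-1=\frac12\Phi(q)$. By Lemma \ref{c1}, $\Phi(q)\mid\Phi(p_Nq)$, so $\frac12\Phi(q)\mid\frac12\Phi(p_Nq)$, and hence $m\mid\lambda\Phi(p_Nq)$ for both values of $\lambda$. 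Thus $R_\lambda(p_Nq)=0$, while periodicity forces $R_\lambda(p_Nq)\equiv R_\lambda(p_N)\not\equiv0\pmod{10}$, a contradiction.

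\textbf{Main obstacle.} The hard part is Step 1(ii) in the leftover case $m\equiv1\pmod{40}$, $\lambda=\frac12$. There I would use a different class $a$. The attainable residues are $2^{a-1}-1\pmod m$ for $a$ a unit modulo $\varphi(m)$, and I must show at least one of them is nonzero and not divisible by $10$. Candidates to try first are $a=\varphi(m)/2+1$ when it is coprime to $\varphi(m)$, or a general counting argument. The latter would show that the residues $2^{a-1}-1$ cannot all lie in the small set $\{0,10,20,\ldots\}$, since $2^{a-1}$ for $a=1$ and $a=\varphi(m)-1$ already give the two distinct values $0$ and $\frac{3m-3}{4}$. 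If this fails for some $m$, the statement for $\lambda=\frac12$ may need the stronger congruence restriction on $m$.
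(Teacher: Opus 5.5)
Your Step 2, and your Step 1 for $\lambda=1$ and for $\lambda=\frac12$ with $m\not\equiv1\pmod{40}$, are correct and follow the paper's argument. The ingredients are the same: the class $p\equiv\varphi(m)-1\pmod{\varphi(m)}$, the auxiliary prime $q\equiv1\pmod{\varphi(m)L}$, and $m\mid\frac12\Phi(q)\mid\frac12\Phi(p_Nq)$ via Lemma~\ref{c1}. The gap is the case $\lambda=\frac12$, $m\equiv1\pmod{40}$, which you leave open, and neither of your suggestions closes it. For $m=41$ the candidate $a=\varphi(m)/2+1=21$ gives $2^{20}-1\equiv0\pmod{41}$; indeed $2^{\varphi(m)/2}\equiv1\pmod m$ for every odd $m$ with two distinct prime factors. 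The proposed counting argument only produces the values $0$ and $\frac{3m-3}{4}$, and both lie in $10\mathbb{Z}$ exactly in this case, so it yields nothing.

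Your diagnosis of the obstacle is right, and it is a genuine defect of the paper's own proof. The paper asserts that the least residue of $\lambda\Phi(p_N)$ modulo $m$ is $\lambda\frac{m-3}{2}$ and is $\not\equiv0\pmod{10}$ for both values of $\lambda$. For $\lambda=\frac12$ and $m=41$ that residue is actually $30$.

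The missing idea is that Step 2 never uses that $p_N$ is prime. Take any $n_0\ge N$ with $R_\lambda(n_0)\not\equiv0\pmod{10}$, and a prime $q\equiv1\pmod{\varphi(m)L}$. Then $n_0q\equiv n_0\pmod L$, and Lemma~\ref{c1} (which only needs the prime $q$ to divide $n_0q$) gives $m\mid\lambda\Phi(n_0q)$. This is the same contradiction.

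So take $n_0=p^2$, where $p$ is a large prime with $p\equiv-1\pmod{\varphi(m)}$. Then
\[
\tfrac12\Phi(p^2)=2^{p^2-1}-2^{p-1}\equiv 1-2^{-2}=3\cdot4^{-1}\pmod m.
\]
For $m\equiv1\pmod4$ the least residue of this is $\frac{m+3}{4}$. When $m=40k+1$ this equals $10k+1$, which is odd, hence not divisible by $10$. This settles the remaining case, so the lemma holds as stated and no stronger restriction on $m$ is needed. Taking $n_0=pq$ with two distinct such primes also works, giving residue $\frac{m+3}{2}\equiv2\pmod{10}$.
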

\begin{proof}
Assume, for the sake of contradiction, that the sequence $\{\big(\lambda \Phi(n) \pmod{m}\big)\pmod{10}\}_{n\geq 1}$ is periodic. Then there exist fixed positive integers $N,L$ such that
\begin{align}\label{10}
\lambda \Phi(n+L) \pmod{m}\equiv \lambda \Phi(n) \pmod{m} \pmod{10}\quad \text{for all~}n\geq N.    
\end{align}
Since there are infinitely many primes of the form $p\equiv 1\pmod{\varphi(m)}$ and $p\equiv\varphi(m)-1\pmod{\varphi(m)}$, we let $p_N\equiv \varphi(m)-1\pmod{\varphi(m)}$ be the smallest prime greater than $N$. Then setting $n=p_N$ in (\ref{10}) we get
\begin{align}\label{11}
\lambda \Phi(p_N+jL) \pmod{m}\equiv \lambda \Phi(p_N) \pmod{m} \pmod{10}\quad \text{for all~}j\in\mathbb{N}.    
\end{align}
Setting $j=p_N\varphi(m)j_N$ in (\ref{11}), where $j_N$ is some positive integer, we obtain
\begin{align}\label{12}
\lambda \Phi(p_N(1+\varphi(m)j_NL))\pmod{m}\equiv \lambda \Phi(p_N) \pmod{m}\pmod{10}.    
\end{align}
Since $\gcd(1,p_N\varphi(m)L)=1$, by the Dirichlet prime number theorem, the following sequence
$$1, \quad 1+p_N\varphi(m)L, \quad 1+2p_N\varphi(m)L,\quad \ldots,\quad 1+j_N'p_N\varphi(m)L,\quad \ldots$$
contains infinitely many primes. Without loss of generality, assume that $1+j_N'p_N\varphi(m)L$ is the smallest prime in this sequence. Then choosing $j_N=j_N'p_N$ in (\ref{12}) we get
\begin{align}\label{13}
\lambda \Phi(p_N(1+\varphi(m)j_N'p_NL))\pmod{m}\equiv \lambda \Phi(p_N) \pmod{m}\pmod{10}.    
\end{align}
Since $1+\varphi(m)j_N'p_NL$ is a prime and satisfies $1+\varphi(m)j_N'p_NL\equiv 1 \pmod{\varphi(m)}$, and  since $\gcd(\frac{1}{\lambda},m)=1$ and $\Phi(n)$ is even for $n\geq2$, we obtain  $\frac{m}{\lambda}\mid \Phi(1+\varphi(m)j_N'p_NL)$. Moreover, using Lemma \ref{c1} and observing that $1+\varphi(m)j_N'p_NL\mid p_N(1+\varphi(m)j_N'p_NL$, we deduce  $\Phi(1+\varphi(m)j_N'p_NL)\mid \Phi(p_N(1+\varphi(m)j_N'p_NL)$, consequently, it follows that
$$\lambda\Phi(p_N(1+\varphi(m)j_N'p_NL))\equiv 0\pmod{m}.$$
Since $p_N\equiv \varphi(m)-1\pmod{\varphi(m)}$ is a prime and $m\neq3 \pmod{10}$ we have
$$\lambda\Phi(p_N)=\lambda(2^{p_N}-2)\pmod{m}=\lambda\frac{m-3}{2} \pmod{5}\neq0,$$
it follows from (\ref{13}) that
\begin{align*}
0\equiv\lambda\Phi(p_N(1+\varphi(m)j_N'p_NL))\pmod{m}\equiv \lambda\Phi(p_N) \pmod{m}=\lambda\frac{m-3}{2}\not\equiv 0\pmod{10}    
\end{align*}
which is absurd. This completes the proof.
\end{proof}

\begin{lemma}\label{lemma3.2}
$\{\big(g(n) \pmod{m}\big)\pmod{10}\}_{n\geq 1}$ is a non-periodic sequence for all odd positive integers $m\geq 5$ with $m\not\equiv 3 \pmod{10}$.
\end{lemma}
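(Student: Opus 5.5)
We argue by contradiction, reducing to Lemma \ref{lemma3.1} with $\lambda=\frac12$ through the difference relation of Lemma \ref{lemma1.5}. Suppose $\{(g(n)\pmod{m})\pmod{10}\}_{n\geq1}$ is eventually periodic, say with period $L$ for $n\geq N$.

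The sequence $(g(n)\pmod m)\pmod{10}$ only records the last digit of the least residue of $g(n)$ modulo $m$. So we cannot simply subtract consecutive terms, since reduction modulo $10$ does not respect subtraction modulo $m$. The plan is to avoid differences altogether and instead transplant the proof of Lemma \ref{lemma3.1} directly to $g$. This uses the fact that $g(n+1)\equiv g(n)\pmod m$ whenever $m\mid\frac12\Phi(n+1)$, so that the two least residues coincide.

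The steps are as follows.

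\begin{enumerate}
\item Choose a prime $p_N>N+1$ with $p_N\equiv\varphi(m)-1\pmod{\varphi(m)}$. By Lemma \ref{lemma1.5}, $g(p_N)-g(p_N-1)=\frac12\Phi(p_N)$, and modulo $m$ this equals $\frac12(2^{p_N}-2)\equiv 2^{p_N-1}-1\equiv 2^{-2}-1$, which is nonzero modulo $m$.

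\item As in Lemma \ref{lemma3.1}, pick a prime $q=1+j'p_N\varphi(m)L$ by Dirichlet. Then $n_1=p_Nq\equiv p_N\pmod L$, and since $\Phi(q)\mid\Phi(n_1)$ by Lemma \ref{c1}, $\frac12\Phi(n_1)\equiv0\pmod m$.

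\item Periodicity gives the digit equalities at $n_1$ and $n_1-1$. Combined with step 2, this yields $g(n_1)\equiv g(n_1-1)\pmod m$, hence equal last digits.

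\item Periodicity also transfers the digits of $g(n_1)$ and $g(n_1-1)$ back to $g(p_N)$ and $g(p_N-1)$. Therefore these two have equal last digits of their residues modulo $m$.
\end{enumerate}

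Step 4 alone is no contradiction, because two distinct residues modulo $m$ can share a last digit. The main obstacle is therefore to force the residues of $g(p_N)$ and $g(p_N-1)$ to differ in last digit as well. I would first try the residue computation of Lemma \ref{lemma3.1}, which is where the hypothesis $m\not\equiv3\pmod{10}$ is designed to guarantee a nonzero last digit.

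If that is insufficient, I would strengthen the argument. Using many primes $q_i$ and the Chinese remainder theorem, one can make the products $n$ satisfy $m\mid\frac12\Phi(n)$ along whole blocks. Concretely, the plan is to arrange that $\frac12\Phi(n+1)\equiv0\pmod m$ for all $n+1$ in a full residue class pattern modulo $L$ except at $p_N$. Then the digit sequence of $g$ over one period, read along two different lifts, is forced to be constant across a step where the genuine increment $\frac12\Phi(p_N)\not\equiv0\pmod m$.

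The point to verify carefully is this: with $g$ constant modulo $m$ across all other steps in a period, periodicity of $g\pmod m$ itself is recovered from periodicity of its digit sequence. The difference $g(n+L)-g(n)\pmod m$ is then the single increment $\frac12\Phi(p_N)$, and that contradicts the equality of digits obtained in step 4 together with step 1.
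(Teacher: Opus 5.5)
Your objection to subtracting inside $(\cdot \bmod m)\bmod 10$ is correct. It is precisely the step the paper's proof takes: it passes from (\ref{15}) to (\ref{16}) by cancelling $g(n-1+L)$ against $g(n-1)$ at the level of last digits, and then invokes Lemma~\ref{lemma3.1} with $\lambda=\frac12$. That cancellation is invalid; for $m=11$, the residues $0$ and $10$ have equal last digits, and so do $0+1$ and $10+2\equiv 1$, but $1$ and $2$ do not. So your route genuinely differs from the paper's, and your Steps 1--4 are sound.

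However, all they deliver is this. Write $h(x)$ for the last digit of the least nonnegative residue of $x$ modulo $m$, and put $A\equiv g(p_N-1)$ and $\delta\equiv\frac12\Phi(p_N)\equiv 2^{-2}-1\pmod m$. You get $h(A)=h(A+\delta)$ with $A$ completely uncontrolled, and the proof stops there; that is the gap. Let $\delta_0\in[1,m-1]$ be the least residue of $\delta$. Then $h(A)=h(A+\delta)$ occurs for some $A$ exactly when $\delta_0\equiv 0$ or $\delta_0\equiv m\pmod{10}$, the second case coming from wrap-around past $m$. So ``nonzero last digit'' is not the relevant condition. For $p_N\equiv-1\pmod{\varphi(m)}$ one computes $\delta_0=(m-3)/4$ if $m\equiv 3\pmod 4$ and $\delta_0=3(m-1)/4$ if $m\equiv 1\pmod 4$. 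This fails for $m\equiv 1,37,39\pmod{40}$: for $m=41$ you get $\delta_0=30$, and for $m=39$ you get $\delta_0=9$, where $A=30$ gives residues $30$ and $0$.

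Your fallback does not close this. Dirichlet plus CRT lets you force zero increments (indices divisible by a prime $q\equiv 1\pmod{\varphi(m)}$) at any finite set of positions around one controlled prime. A second controlled nonzero increment in the same window, however, would require two linear forms to be simultaneously prime, or of comparably controlled factorization. So every block you can build again yields only $h(A')=h(A'+\delta')$ with $A'$ uncontrolled. The claim that periodicity of $g\pmod m$ is ``recovered'' is unjustified, because $h$ is not injective once $m>10$, and the closing ``contradiction'' is just Step 4 restated.

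What is missing is a choice of residue class for $p_N$ for which $h(A)\neq h(A+\delta)$ holds for every $A$. For example, if $3\nmid\mathrm{ord}_m(2)$, take a prime $p_N\equiv 3\pmod{\mathrm{ord}_m(2)}$. Steps 2--4 go through unchanged, $\delta\equiv 3$, and $h(A)=h(A+3)$ forces $m\equiv 3\pmod{10}$; this is where the hypothesis genuinely enters. The case $3\mid\mathrm{ord}_m(2)$ needs a separate choice of class. For $m\in\{5,7,9\}$ the statement follows at once from Theorem~\ref{thm1.3}, since $h$ is then injective.
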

\begin{proof}
Assume, for the sake of contradiction, that the sequence $\{\big(g(n) \pmod{m}\big)\pmod{10}\}_{n\geq 1}$ is periodic. Then there exist fixed positive integers $N,L$ such that
\begin{align}\label{14}
g(n+L) \pmod{m}\equiv g(n) \pmod{m} \pmod{10}\quad \text{for all~}n\geq N,    
\end{align}
then for all $n\geq N+1$ and by Lemma \ref{lemma1.5} we have
\begin{align}\label{15}
\frac{1}{2}\Phi(n+L)+g(n-1+L) \pmod{m}\equiv \frac{1}{2}\Phi(n)+g(n-1) \pmod{m} \pmod{10},    
\end{align}
since $g(n-1+L) \pmod{m}\equiv g(n-1) \pmod{m} \pmod{10}$, we can write
\begin{align}\label{16}
\frac{1}{2}\Phi(n+L)\pmod{m}\equiv \frac{1}{2}\Phi(n)\pmod{m} \pmod{10}\quad \text{for all~}n\geq N+1,    
\end{align}
which implies that $\{\big(\frac{1}{2}\Phi(n) \pmod{m}\big)\pmod{10}\}_{n\geq 1}$ is an eventually periodic sequence for all odd positive integers $m\geq 5$ with $m\not\equiv 3 \pmod{10}$, but this contradicts Lemma \ref{lemma3.1}. This completes the proof.
\end{proof}

\begin{proof}[\textbf{Proof of Theorem \ref{thm1.10}}]

\noindent
\begin{enumerate}
 \item The proof follows from the fact that  
$$\sum_{r\geq1}\frac{\tau(r)\pmod{m}}{10^r}$$
is an irrational number by Theorem \ref{theorem tau} and from Theorem \ref{thm*}.
 \item The proof follows from the fact that  
$$\sum_{r\geq1}\frac{a(r)\pmod{m}}{10^r}$$
is an irrational number by Theorem \ref{theorem es} and from Theorem \ref{thm*}.
\item The proof follows from the fact that  
$$\sum_{r\geq1}\frac{b(r)\pmod{m}}{10^r}$$
is an irrational number by Theorem \ref{theorem es} and from Theorem \ref{thm*}.
\item The proof follows from the fact that  
$$\sum_{r\geq1}\frac{c(r)\pmod{m}}{10^r}$$
is an irrational number by Theorem \ref{theorem es} and from Theorem \ref{thm*}.
\item The proof follows from the fact that  
$$\sum_{r\geq1}\frac{d(r)\pmod{m}}{10^r}$$
is an irrational number by Theorem \ref{theorem es} and from Theorem \ref{thm*}.
\item The proof follows from the fact that  
$$\sum_{r\geq1}\frac{e(r)\pmod{m}}{10^r}$$
is an irrational number by Theorem \ref{theorem es} and from Theorem \ref{thm*}.
 \item  The proof follows from the fact that  
 $$\sum_{r\geq1}\frac{\big(\Phi(r)\pmod{m}\big)\pmod{10}}{10^r}$$
 is an irrational number by Lemma \ref{lemma3.1} and from Theorem \ref{thm*}. 
 \item  The proof follows from the fact that  
 $$\sum_{r\geq1}\frac{\big(g(r)\pmod{m}\big)\pmod{10}}{10^r}$$
 is an irrational number by Lemma \ref{lemma3.2} and from Theorem \ref{thm*}. 
 \item The proof follows from the fact that  
$$\sum_{r\geq1}\frac{\varphi(r)\pmod{m}}{10^r}$$
is an irrational number by Theorem \ref{thm1.5} and from Theorem \ref{thm*}.
 \item The proof follows from the fact that  
$$\sum_{r\geq1}\frac{\sigma(r)\pmod{m}}{10^r}$$
is an irrational number by Theorem \ref{thm1.5} and from Theorem \ref{thm*}.
 \item The proof follows from the fact that  
$$\sum_{r\geq1}\frac{\sigma*\varphi(r)\pmod{m}}{10^r}$$
is an irrational number by Theorem \ref{thm1.5} and from Theorem \ref{thm*}.
 \item The proof follows from the fact that  
$$\sum_{r\geq1}\frac{J_v(r)\pmod{m}}{10^r}$$
is an irrational number by Theorem \ref{thm1.5} and from Theorem \ref{thm*}.
 \item The proof follows from the fact that  
$$\sum_{r\geq1}\frac{\varphi^{*}(r)\pmod{m}}{10^r}$$
is an irrational number by Theorem \ref{thm1.5} and from Theorem \ref{thm*}.

\end{enumerate}
\end{proof}


\section{Data Availability} 	
The authors confirm that their manuscript has no associated data.

\section{Competing Interests}
The authors confirm that they have no competing interest.

\end{document}